\newtheorem{thm}{Theorem}[section]
\newtheorem{lem}[thm]{Lemma}
\newtheorem{prop}[thm]{Proposition}
\theoremstyle{definition}
\theoremstyle{remark}
\newtheorem{example}{Example}[section]
\newcommand{\Z}{\mathbb Z}
\newcommand{\C}{\mathbb C}
\title[Perverse Sheaves on Plane Line Arrangements]{Decomposition of Perverse Sheaves on Plane Line Arrangements}
 \author[R.~B\o gvad]{Rikard B\o gvad}
\address{Department of Mathematics, Stockholm University, SE-106 91
Stockholm,         Sweden}
\email {rikard@math.su.se }
\author[I.~Gon\c calves]{Iara Gon\c calves}
\address{Department of Mathematics, Stockholm University, SE-106 91
Stockholm,         Sweden}
\email{iara@math.su.se}
\begin{document}

\begin{abstract}
On the complement $X= \C^2 - \bigcup_{i=1}^n L_i$ to a central plane line arrangement $\bigcup_{i=1}^n L_i \subset \C^2$, a locally constant sheaf of complex 
vector spaces $\mathcal L_a$ is associated to any multi-index $a \in \C^n$. Using the description of MacPherson and Vilonen of the category of perverse sheaves (\cite{MV2} and \cite {MV3}) we obtain a criterion for the irreducibility and number of 
decomposition factors of the direct image $Rj_* \mathcal L_a$ as a perverse sheaf, where $j: X \rightarrow \C^2$ is the canonical inclusion.
\end{abstract}

\maketitle

\section{Introduction}

Recall that the category of perverse sheaves is an 
abelian category where each object has a finite decomposition series. The aim of this work is to analyze the irreducible factors in a decomposition series of a direct image of a rank 1 locally constant sheaf, $\mathcal L_a$. 
To define $\mathcal L_a$, consider a central line arrangement $S:=\bigcup_{i=1}^n L_i \subset \C^2$. Denote by $j$ the affine inclusion $U:=\C^2\setminus S\to \C^2$. 
A locally constant sheaf (of complex vector spaces) on $U$ is given by a complex representation of $\pi_1(U)$, a group that is generated by 
loops around $L_i$. Hence, to each multi-index $a =(a_1, \ldots, a_n)\in \C^n$, there is associated a rank 1 sheaf $\mathcal L_{a}$, where $a_i$ is the result of the action of a loop around $L_i$.
Our main result is a description of the number of decomposition factors of the perverse sheaf $Rj_* \mathcal L_a$ in terms of $a$; in particular we give a criterion for irreducibility. 

\begin{thm}
\label{j2irre}
The perverse sheaf $Rj_* \mathcal L_{a}$, where $j: \C^2 - \cup_{i=1}^n L_i \rightarrow \C^2$, is irreducible if, and only if, both of the following conditions are satisfied:
\begin{itemize}
\item $a_i \neq 1$, for all $i=1, \ldots, n$;
\item $\Pi_{i=1}^{n} a_i \neq 1$.
\end{itemize}
\end{thm}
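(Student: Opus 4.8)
The plan is to test simplicity of $Rj_*\mathcal L_a$ --- which is perverse because $U$ is the complement of a hypersurface in $\C^2$, hence $j$ is affine --- by comparing it with the intermediate extension $j_{!*}\mathcal L_a\subseteq Rj_*\mathcal L_a$. Since $\mathcal L_a$ is a rank one, hence irreducible, local system on the connected smooth surface $U$, the subobject $j_{!*}\mathcal L_a$ is a simple perverse sheaf, so $Rj_*\mathcal L_a$ is irreducible iff $Rj_*\mathcal L_a=j_{!*}\mathcal L_a$. Writing $i\colon S\hookrightarrow\C^2$ for the closed inclusion, the adjunction unit $Rj_*\mathcal L_a\to Rj_*j^*Rj_*\mathcal L_a$ is an isomorphism, whence $i^!Rj_*\mathcal L_a=0$; thus $Rj_*\mathcal L_a$ automatically satisfies the costalk half of the conditions characterizing $j_{!*}$, and the problem collapses to checking the support (stalk) conditions along the strata of $S$. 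For the stratification of $\C^2$ by $U$, the punctured lines $\Lambda_i:=L_i\setminus\{0\}$, and $\{0\}$, these read: $\mathcal H^k(Rj_*\mathcal L_a)$ must vanish on $\Lambda_i$ for $k\ge-1$ and at $0$ for $k\ge0$. (These are precisely the vanishings whose failure the MacPherson--Vilonen functors at a stratum record as nonzero ``vanishing cycle'' data, equivalently as extra composition factors supported on that stratum.)

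First I would treat the one-dimensional strata. Near a generic point of $\Lambda_i$ the pair $(\C^2,S)$ is $(\C^2,\{x=0\})$ with $\mathcal L_a$ of local monodromy $a_i$, so the stalk of $Rj_*\mathcal L_a$ there is $R\Gamma(D^{\ast},\mathcal L_a)[2]$ for a punctured disc $D^{\ast}$; its cohomology in degree $-1$ is $H^1(D^{\ast},\mathcal L_a)$, the $a_i$-coinvariants of $\C$, which is nonzero precisely when $a_i=1$, while degrees $\ge0$ vanish automatically. Hence the support condition holds along every $\Lambda_i$ iff $a_i\ne1$ for all $i$; when some $a_i=1$, the sheaf $Rj_*\mathcal L_a$ acquires the extra composition factor $IC_{L_i}(\rho_i)$, with $\rho_i$ the rank one local system on $\Lambda_i$ of monodromy $\prod_{j\ne i}a_j$, and is therefore reducible. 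This disposes of the ``first condition'' part of the equivalence.

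Assume now $a_i\ne1$ for all $i$, so only the condition at $0$ remains: $\mathcal H^k(Rj_*\mathcal L_a)_0=0$ for $k\ge0$. Since $S$ is conical, a small punctured ball at $0$ meets $U$ in a set homotopy equivalent to $U$, so $(Rj_*\mathcal L_a)_0\simeq R\Gamma(U,\mathcal L_a)[2]$ and, as $U$ has the homotopy type of a $2$-complex, the condition is just
\[
H^2(U,\mathcal L_a)=0 .
\]
Now $U$ is the total space of a $\C^{\ast}$-bundle over $V:=\mathbb P^1\setminus\{p_1,\dots,p_n\}$, $p_i$ the direction of $L_i$; as $H^2(V,\Z)=0$ this bundle is topologically trivial, $U\simeq V\times\C^{\ast}$ and $\pi_1(U)\cong F_{n-1}\times\Z$, the $\Z$-factor generated by the total loop $\delta=\gamma_1\cdots\gamma_n$, whose $\mathcal L_a$-monodromy is $\prod_i a_i$. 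By Künneth, and because $H^2(V,-)=H^2(\C^{\ast},-)=0$,
\[
H^2(U,\mathcal L_a)\;\cong\;H^1\bigl(V,\mathcal L_a|_V\bigr)\otimes H^1(\C^{\ast},\mathcal L_\delta) .
\]
If $\prod_i a_i\ne1$ the second factor vanishes, hence $H^2(U,\mathcal L_a)=0$ and $Rj_*\mathcal L_a$ is simple. If $\prod_i a_i=1$ then $\mathcal L_\delta$ is trivial and $H^1(\C^{\ast},\mathcal L_\delta)=\C$, while $\mathcal L_a|_V$ has monodromies $a_1,\dots,a_n$, all $\ne1$, so $\mathcal L_a|_V$ is nontrivial, $H^0(V,\mathcal L_a|_V)=H^2(V,\mathcal L_a|_V)=0$, and $\dim H^1(V,\mathcal L_a|_V)=-\chi(V)=n-2>0$; thus $H^2(U,\mathcal L_a)\ne0$ and $Rj_*\mathcal L_a$ is reducible. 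Together with the previous paragraph this gives the theorem.

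The main difficulty is not a single hard estimate but a matching problem: one must translate the recollement/support-condition bookkeeping above into the MacPherson--Vilonen description actually used in the paper --- identifying, at each stratum of $S$, the ``extra data'' of $Rj_*\mathcal L_a$ with the critical-degree stalk cohomology (so that at $0$ it is a composition factor $\C_0^{\oplus m}$ with $m=\dim H^2(U,\mathcal L_a)$, and along $\Lambda_i$ it is $IC_{L_i}(\rho_i)$) --- and one must be careful in the last step that the conical identification and the fibration $U\to V$ are normalized so that the $\C^{\ast}$-factor really carries the monodromy $\prod_i a_i$ rather than a twist of it, since the whole criterion turns on whether that rank one system on $\C^{\ast}$ is trivial.
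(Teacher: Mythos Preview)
Your proof is correct and takes a genuinely different route from the paper. The paper works entirely inside the MacPherson--Vilonen quiver category: it factors $j=j^2\circ j^1$, represents ${}^pj^1_*\mathcal L_a$ by the triangle diagrams \eqref{diagABC}, and then at the origin computes the variation map as the explicit $n\times n$ matrix $M_n$ of \eqref{formag}, proving by an inductive determinant calculation (Lemma~\ref{propdetm}) that a particular $(n{-}1)\times(n{-}1)$ minor equals $(-1)^{n-1}(a_1-1)\bigl(\prod_i a_i-1\bigr)^{n-2}$; irreducibility is then read off from Proposition~\ref{irpor}. You instead reduce irreducibility to the equality $Rj_*\mathcal L_a=j_{!*}\mathcal L_a$, use $i^!Rj_*=0$ to make the cosupport condition automatic, and check the support condition stratum by stratum: a punctured-disc computation along each $\Lambda_i$ gives the conditions $a_i\neq 1$, and at the origin everything reduces to whether $H^2(U,\mathcal L_a)=0$, which you settle via the $\C^*$-fibration $U\to\mathbb P^1\setminus\{n\text{ points}\}$ and K\"unneth. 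Your approach is more conceptual and avoids the matrix algebra entirely; the paper's approach, while heavier, produces the explicit quiver data that are reused for the composition-factor counts in Theorem~\ref{teopr}. The worry in your last paragraph about translating back into the MacPherson--Vilonen language is unnecessary: your argument is self-contained and does not need that bridge.

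One boundary case deserves a remark. You assert $n-2>0$ without comment, and indeed for $n=2$ your own computation gives $H^2(U,\mathcal L_a)=0$ even when $a_1a_2=1$ (with $a_1,a_2\neq 1$): here $U\cong\C^*\times\C^*$ and $Rj_*\mathcal L_a\cong Rj_{1*}\mathcal L_{a_1}\boxtimes Rj_{2*}\mathcal L_{a_2}$ is an exterior product of two simple perverse sheaves on $\C$, hence simple. The same exponent $n-2$ appears in the paper's Lemma~\ref{propdetm}, and the paper's converse step (``if $\prod a_i=1$ then $var$ is not an isomorphism'') is asserted rather than proved and likewise fails for $n=2$. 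So both arguments tacitly require $n\geq 3$ for the ``only if'' direction; your cohomological computation makes this transparent.
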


The proof of this theorem and the related results on the number of decomposition facors, is contained in section 3. It proceeds by using the quiver methods developed by MacPherson and Vilonen in \cite{MV2}, and \cite {MV3}; these are recalled and adapted to our case in section 2. The proof, ending in an intricate determinant computation, gives i.a.  a quiver description of the irreducible factors of $Rj_* \mathcal L_a$, that may also be of interest.
By the Riemann-Hilbert correspondence, the theorem relates to a similar assertion of D-modules, which was treated by Abebaw-B\o gvad in \cite{AB}, Theorem 1.3. Our original motivation for this project was the hope that the proofs of that paper, which use totally different techniques, would be simplified in the perverse setting. A secondary motivation was to provide an explicit accessible example of computations with the quiver description given in the above references.

A generalization of the irreducibility conditions to arbitrary 
arrangements was presented by Budur et al. in the recent \cite{Bud}.

\subsection{Introductory example}

A basic example of what we intend to do for plane arrangements is the following (wellknown) result.
Let $X$ be a path-connected topological space and $\pi_1(X, x_0)$ be its fundamental group with base point $x_0$. The monodromy representation of $\pi_1(X,x_0)$ on the stalk $\mathcal L_{x_0}$, defines an equivalence 
between the category of locally constant sheaves $\mathcal L$ on a space $X$ and the category of finite dimensional complex representations of the fundamental group of $X$.

Consider $\C^* = \C - \{ 0 \}$ with the fundamental group $\pi_1(\C^*)= \{ n \gamma, n \in \Z \} \cong \Z$, where $\gamma$ is a loop going around $0$ once starting in $x_0=1$. A locally 
constant sheaf $\mathcal L$ of rank 1 on $\C^*$ is classified, up to isomorphism, by the element $\alpha \in \C$, such that for the monodromy representation $\gamma {\bf e} = \alpha {\bf e}$, 
where $ \C {\bf e}$ is the stalk of $\mathcal L$ at $1 \in \C^*$.
Denote by $\mathcal L_{\alpha}$ the rank 1 sheaf corresponding to $\alpha \in \C$.
Consider the inclusion $j: \C^* \rightarrow \C$. Denote by $\mathcal Sk$ the skyscraper sheaf at $0\in \C$, defined by having its only non-trivial stalk equal $\C$ at $0\in \C$.

\begin{prop} 
\label{prop:Clc}
\begin{enumerate}
\item If $\alpha\neq 1$ then $$j_!\mathcal L_\alpha\cong R^0 j_*\mathcal L_\alpha\cong R j_*\mathcal L_\alpha$$
are isomorphic and irreducible perverse sheaves in the derived category of sheaves $D_{\C}$ in $\C$.
\item If $\alpha= 1$, so that $\mathcal L_1$ is a constant sheaf on $\C^*$, there are short exact sequences of perverse sheaves
$$\mathcal Sk[-1]\hookrightarrow j_!\mathcal L_1 \twoheadrightarrow  R^0 j_*\mathcal L_1$$
and
$$R^0 j_*\mathcal L_1\hookrightarrow R j_*\mathcal L_1 \twoheadrightarrow \mathcal Sk[-1].$$
Furthermore $\mathcal Sk[-1]$ and $R^0 j_*\mathcal L_1$ are irreducible perverse sheaves.
\end{enumerate}
\end{prop}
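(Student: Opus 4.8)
The strategy is to reduce everything to the familiar exact triangles relating $j_!$, $Rj_*$, and the costalk/stalk at the point $0$, and then to carry out the (very small) explicit computation of the relevant cohomology sheaves using the local structure of $\mathcal L_\alpha$ near $0$. First I would set up the attaching triangle: writing $i: \{0\}\hookrightarrow \C$ for the closed inclusion, there is a distinguished triangle $i_!i^!Rj_*\mathcal L_\alpha \to Rj_*\mathcal L_\alpha \to Rj_*\mathcal L_\alpha|_{U}\to$, and dually one relating $j_!\mathcal L_\alpha$ to its restriction and its costalk at $0$. Concretely, since $\C^*$ is homotopy equivalent to a circle, a small punctured disc $D^*$ around $0$ has $H^0(D^*,\mathcal L_\alpha)=\ker(\alpha-1)$ and $H^1(D^*,\mathcal L_\alpha)=\operatorname{coker}(\alpha-1)$ on the stalk $\C{\bf e}$. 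Hence $(R^qj_*\mathcal L_\alpha)_0 = H^q(D^*,\mathcal L_\alpha)$ vanishes for $q\ge 1$ exactly when $\alpha\neq 1$, and then $Rj_*\mathcal L_\alpha = R^0j_*\mathcal L_\alpha$ is a single sheaf placed in degree $0$; similarly the costalk computation gives $i^!j_!\mathcal L_\alpha$.

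For part (1), assume $\alpha\neq 1$. Then the stalk and costalk of $Rj_*\mathcal L_\alpha$ (resp. $j_!\mathcal L_\alpha$) at $0$ vanish in the critical degrees, so the natural map $j_!\mathcal L_\alpha\to Rj_*\mathcal L_\alpha$ is an isomorphism, and the common object is $R^0j_*\mathcal L_\alpha$, a rank-one local system on $\C^*$ extended by zero across $0$. To see it is a perverse sheaf on $\C$ (with the middle perversity) one checks the support and cosupport conditions: on the open stratum $\C^*$ it is a local system in degree $0$, which is perverse since $\dim \C^*=1$ means the shift is by $1$ — here one must be careful about conventions, and I would state explicitly that we use the convention under which a local system on a curve, not shifted, is perverse (as is implicit in the statement), or equivalently work with $\mathcal L_\alpha[1]$ throughout; the stalk/costalk at $0$ vanish, so both conditions hold at $0$. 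Irreducibility follows because the intermediate extension $j_{!*}\mathcal L_\alpha$ of an irreducible (rank one) local system is a simple object in the category of perverse sheaves, and when $\alpha\neq1$ we have just shown $j_!=Rj_*=j_{!*}$.

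For part (2), take $\alpha=1$, so $\mathcal L_1=\C_{\C^*}$ is constant. Now $H^1(D^*,\C)=\C\neq 0$, so $Rj_*\mathcal L_1$ has a nontrivial $R^1$, namely the skyscraper $\mathcal{S}k$ at $0$; truncation gives the triangle $R^0j_*\mathcal L_1 \to Rj_*\mathcal L_1\to R^1j_*\mathcal L_1[-1]\to$, i.e. the second short exact sequence once we identify $R^1j_*\mathcal L_1[-1]=\mathcal{S}k[-1]$ and check each term is perverse. The first short exact sequence is obtained dually (or by applying Verdier duality to the second, using $\mathbb D(j_!\mathcal L_1)=Rj_*\mathcal L_1$ up to the appropriate shift/orientation and $\mathbb D\,\mathcal{S}k[-1]=\mathcal{S}k[-1]$), or directly from the attaching triangle $i_*i^*j_!\mathcal L_1[-1]\to j_!\mathcal L_1\to Rj_{U!}\mathcal L_1$ combined with the computation of the stalk $i^*j_!\mathcal L_1$. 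Finally, $\mathcal{S}k[-1]$ is irreducible as it is the intermediate extension of the (zero) local system from the point $\{0\}$, equivalently a simple skyscraper, and $R^0j_*\mathcal L_1=j_{!*}\C_{\C^*}$ is irreducible as the intermediate extension of the trivial rank-one local system.

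The main obstacle I anticipate is purely bookkeeping with perversity conventions and shifts: on a one-dimensional base the shift by $1$ matters, and the cleanest route is either to fix once and for all the convention in which an unshifted local system on the open curve is perverse (so that skyscrapers must be shifted by $[-1]$, matching the statement), or to verify the two support/cosupport inequalities by hand for each of the three sheaves $R^0j_*\mathcal L_\alpha$, $\mathcal{S}k[-1]$, and $Rj_*\mathcal L_1$. Everything else — the local cohomology of $\mathcal L_\alpha$ on a punctured disc, the identification of $R^1j_*$ with a skyscraper, and the simplicity of intermediate extensions of rank $\le 1$ local systems — is standard and quick.
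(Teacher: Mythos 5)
Your plan is correct in substance, but note that the paper itself does not prove Proposition \ref{prop:Clc} in the text: it defers to the thesis \cite{Gon}, whose proof is described as using ``just the definition of perverse sheaves'' --- essentially the stalk/costalk and truncation-triangle bookkeeping you propose --- and it remarks after Proposition \ref{j1irre} that the quiver machinery gives an alternative proof. That alternative is genuinely different from your route: for the stratification $\{0\}\subset\C$ (the one-variable analogue of Section \ref{section:plane1}), a perverse sheaf is a diagram $\psi\to\Phi\to\psi_c$ with $var=\alpha-1$ on a one-dimensional space of nearby cycles; $Rj_*\mathcal L_\alpha$ is the diagram $\C\xrightarrow{\alpha-1}\C\xrightarrow{\,id\,}\C$ and $j_!\mathcal L_\alpha$ is $\C\xrightarrow{\,id\,}\C\xrightarrow{\alpha-1}\C$, so Proposition \ref{irpor} gives irreducibility exactly when $\alpha\neq 1$, while for $\alpha=1$ the two short exact sequences are read off from the evident sub/quotient diagrams with a skyscraper factor $0\to\C\to 0$. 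Your approach buys independence from the MacPherson--Vilonen formalism (it lives entirely in $D_\C$ and extends verbatim to higher rank local systems); the quiver approach reduces both exact sequences and all irreducibility claims to finite-dimensional linear algebra, which is the point the paper wants to illustrate.

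Two small slips to repair in your write-up. First, the ``direct'' derivation you offer for the sequence $\mathcal Sk[-1]\hookrightarrow j_!\mathcal L_1\twoheadrightarrow R^0j_*\mathcal L_1$ via a triangle involving $i_*i^*j_!\mathcal L_1$ is degenerate, since $i^*j_!\mathcal L_1=0$; the correct direct argument rotates the triangle coming from the sheaf sequence $0\to j_!\C_{\C^*}\to\C_{\C}\to\C_0\to 0$ (after checking all three terms are perverse in the paper's convention), and your Verdier-duality alternative also works. Second, $\mathcal Sk[-1]$ is the intermediate extension of the trivial rank-one local system on the stratum $\{0\}$, not of a ``zero'' local system. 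With these corrections, and with the convention fixed as in the paper (a local system in degree $0$ is perverse, so the skyscraper must carry the shift $[-1]$), your argument goes through.
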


For more details and a proof, see Gon\c calves \cite{Gon}; in particular the proof given there just uses the definition of perverse sheaves.\vspace{0.2cm}

Our main reference on perverse sheaves is the work by Beilinson, Bernstein and Deligne, \cite{BBD}. We have also used the papers of Massey, \cite{DM}, and Rietsch, \cite{KoR}. Throughout we only consider the middle perversity. In contrast to \cite{BBD} we use the convention that a locally constant sheaf placed in homological degree $0$ is perverse. This amounts to a shift of the derived category by $\text{dim }X$.

It is due to P. Deligne that we have a quiver description of irreducible perverse sheaves. This idea was developed and discussed in more detail by R. MacPherson and K. Vilonen in \cite{MV1}, \cite{MV2} 
and \cite{MV3}. Our next section is based on these last papers.

\section{Preliminaries}

We describe, following  \cite{MV2}, a category $\mathcal C (F,G;T)$ that is equivalent to the category of perverse sheaves, $\mathcal M(X)$ on a stratified space $X$. The idea is roughly that the category of perverse sheaves is glued from the category of 
perverse sheaves $\mathcal A$ on an open strata of maximal dimension, and the category of perverse sheaves $\mathcal B$ on the complement.
\subsection{Categorical construction}

The initial definitions as stated in an abstract setting in \cite{MV2} are as follows: consider two categories $\mathcal A$ and $\mathcal B$, two functors $F$ and $G$ from $\mathcal A$ to $\mathcal B$, and a 
natural transformation $T$ from $F$ to $G$. Symbolically $F,G:\mathcal A \rightarrow \mathcal B$ and $F \xrightarrow{T} G$. Define the category $\mathcal{C}(F,G;T)$ to be the category whose objects are pairs 
$(A, B) \in Obj\mathcal{A} \times Obj \mathcal{B}$ together with a commutative triangle
\begin{equation}
\label{eq:diagram}
\xymatrix{
FA \ar[rr]^{T_A} \ar[rd]_{m} & & GA \\ 
& B \ar[ur]_n &  }
\end{equation}
and whose morphisms are pairs $(a,b) \in Mor \mathcal A \times Mor \mathcal B$ such that
$$\xymatrix{
FA \ar[rr]^{T_A} \ar[rd]^m \ar[dd]_{Fa} & & GA \ar[dd]^{Ga} \\ 
& B \ar[ur]^n \ar[dd]_<<<<<<b &  \\
FA' \ar'[r][rr]^{T_{A'}} \ar[rd]_{m'} & & GA' \\
& B' \ar[ur]_{n'} & }
$$

\begin{prop} (\cite{MV2}, pp. 405)
If $\mathcal A$ and $\mathcal B$ are abelian categories and if $F$ is right exact and if $G$ is left exact, then the 
category $\mathcal C(F,G;T)$ is abelian and the functors taking $(A,B) \mapsto A$ and $(A,B) \mapsto B$ from 
$\mathcal C(F,G;T)$ to $\mathcal A$ and $\mathcal B$ are exact.
\end{prop}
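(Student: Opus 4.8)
The plan is to transport the abelian structure of $\mathcal A$ and $\mathcal B$ to $\mathcal C:=\mathcal C(F,G;T)$ componentwise, the point being that the one-sided exactness hypotheses are exactly what is needed to carry the structure maps $m,n$ of \eqref{eq:diagram} along kernels and cokernels. First I would check that $\mathcal C$ is additive: the set $\mathrm{Hom}_{\mathcal C}\big((A,B),(A',B')\big)$ is the subgroup of $\mathrm{Hom}_{\mathcal A}(A,A')\times\mathrm{Hom}_{\mathcal B}(B,B')$ cut out by the (linear) commutativity conditions in the morphism diagram; the pair $(0,0)$ with zero structure maps is a zero object (here one uses $F0=0$ and $G0=0$, valid since $F$ and $G$ are additive); and the componentwise biproduct $(A_1\oplus A_2,\,B_1\oplus B_2)$, with structure maps induced from $F(A_1\oplus A_2)\cong FA_1\oplus FA_2$ and $G(A_1\oplus A_2)\cong GA_1\oplus GA_2$, is a biproduct in $\mathcal C$.

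Next I would construct kernels explicitly. Given $(a,b)\colon(A,B,m,n)\to(A',B',m',n')$, set $K=\ker a$, $L=\ker b$ with inclusions $k\colon K\hookrightarrow A$ and $\ell\colon L\hookrightarrow B$. Since $b\circ m\circ Fk=m'\circ F(ak)=0$, the morphism $m\circ Fk\colon FK\to B$ factors uniquely as $\ell\circ m_K$ for some $m_K\colon FK\to L$. Since $Ga\circ n\circ\ell=n'\circ b\circ\ell=0$, the morphism $n\circ\ell\colon L\to GA$ factors through the kernel of $Ga$; and here is where left exactness of $G$ enters, because it is precisely this hypothesis that makes $Gk\colon GK\to GA$ a kernel of $Ga$, so that $n\circ\ell=Gk\circ n_K$ for a unique $n_K\colon L\to GK$. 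Naturality of $T$ along $k$ then gives $Gk\circ n_K\circ m_K=n\circ\ell\circ m_K=n\circ m\circ Fk=T_A\circ Fk=Gk\circ T_K$, and as $Gk$ is monic this yields $n_K\circ m_K=T_K$; thus $(K,L,m_K,n_K)$ is an object of $\mathcal C$ and $(k,\ell)$ is a morphism to $(A,B,m,n)$. A diagram chase through the universal properties of $K$ in $\mathcal A$ and $L$ in $\mathcal B$ shows it is a kernel of $(a,b)$ in $\mathcal C$.

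Cokernels are treated dually: with $C=\mathrm{coker}\,a$, $D=\mathrm{coker}\,b$ and quotient maps $q\colon A'\to C$, $r\colon B'\to D$, right exactness of $F$ makes $Fq\colon FA'\to FC$ a cokernel of $Fa$, so $r\circ m'$ factors through $FC$ to give $m_C\colon FC\to D$, while $Gq\circ n'$ factors through $D$ to give $n_C\colon D\to GC$; naturality of $T$ along $q$ supplies the triangle identity, and one checks that $(C,D,m_C,n_C)$ is a cokernel of $(a,b)$. Because kernels and cokernels in $\mathcal C$ have underlying pairs $(\ker a,\ker b)$ and $(\mathrm{coker}\,a,\mathrm{coker}\,b)$, the two projections preserve them and are therefore exact. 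Finally, to see that $\mathcal C$ is abelian I would compute the image (kernel of the cokernel) and coimage (cokernel of the kernel) of a morphism $(a,b)$ componentwise: both have underlying pair $(\mathrm{im}\,a,\mathrm{im}\,b)$, and the canonical morphism $\mathrm{coim}(a,b)\to\mathrm{im}(a,b)$ has for components the canonical isomorphisms $\mathrm{coim}\,a\xrightarrow{\ \sim\ }\mathrm{im}\,a$ and $\mathrm{coim}\,b\xrightarrow{\ \sim\ }\mathrm{im}\,b$ of the abelian categories $\mathcal A$ and $\mathcal B$. Since a morphism of $\mathcal C$ with invertible components is invertible in $\mathcal C$ (the componentwise inverse automatically respects the structure maps), this canonical morphism is an isomorphism, which is the last abelian axiom.

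The only real subtlety --- the rest being bookkeeping with universal properties --- is the asymmetry imposed by the hypotheses. Because $F$ is only right exact, $Fk\colon FK\to FA$ need not be monic, so the structure map of the kernel object cannot be obtained by restricting $m$ along an inclusion $FK\hookrightarrow FA$; it must be built by factoring $m\circ Fk$ through $L=\ker b$ instead. Dually, because $G$ is only left exact, $Gq\colon GA\to GC$ need not be epic, and the structure map of the cokernel object must be built by factoring $Gq\circ n'$ through $D=\mathrm{coker}\,b$. Keeping track of which of the two one-sided exactness assumptions is invoked at each factorization is the crux of the argument.
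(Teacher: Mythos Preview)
Your argument is correct. The construction of kernels and cokernels componentwise, using left exactness of $G$ to factor $n\circ\ell$ through $Gk=\ker(Ga)$ and right exactness of $F$ to factor $r\circ m'$ through $Fq=\mathrm{coker}(Fa)$, is exactly the intended mechanism, and your verification of the triangle identities via naturality of $T$ together with the mono/epi properties of $Gk$ and $Fq$ is clean. The final step, that a morphism in $\mathcal C$ with invertible components is invertible, is the right way to dispatch the coimage-to-image axiom.

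As for comparison: the paper does not actually give a proof of this proposition. It is quoted verbatim from \cite{MV2}, p.~405, and used as a black box, so there is no ``paper's own proof'' to compare yours against. Your write-up is essentially the argument one finds in the original source (and is the natural one): nothing exotic is needed beyond the componentwise transport you describe, with the one-sided exactness hypotheses entering precisely at the two factorizations you flag in your last paragraph.
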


In this context we have the following canonical functors from $\mathcal A$ to $\mathcal C (F,G;T)$, $\widehat{F}$, $\widehat{G}$ and $\widehat{T}$ given, respectively, by:

\noindent
\begin{center}
\begin{equation}
\label{tilt}
\widehat{F}(A):=\xymatrix{
FA \ar[rr]^{T_A} \ar[rd]_{id} & & GA\\
& FA \ar[ru]_{T_A} &
}\hspace{1.0cm}\widehat{G}(A):=\xymatrix{
FA \ar[rr]^{T_A} \ar[rd]_{T_A} & & GA\\
& GA \ar[ru]_{id} &
}
\end{equation}
\end{center}
$$\widehat{T}(A):=\xymatrix{
FA \ar[rr]^{T_A} \ar@{->>}[rd] & & GA\\
& Im T_A \ar[ru] &
}$$
It is easy to characterize the irreducible objects in $\mathcal C(F,G;T)$.
\begin{prop}(\cite{KV}, pp.666)
\label{irpor}
Irreducible objects in $\mathcal C(F,G;T)$ are either of the form $\widehat{T}(L)$, where $L$ is an irreducible object
in $\mathcal A$, or of the form
$$\xymatrix{
0 \ar[rr] \ar[rd] & & 0 \\
 & L \ar[ru] &
}
$$
where $L$ is irreducible in $\mathcal B$. Conversely, all such objects are irreducible.
\end{prop}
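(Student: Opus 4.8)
The plan is to use the preceding proposition, which tells us that $\mathcal C(F,G;T)$ is abelian and that the projections $p_{\mathcal A}\colon (A,B)\mapsto A$ and $p_{\mathcal B}\colon (A,B)\mapsto B$ are exact. Since exact functors preserve kernels and the zero object of $\mathcal C(F,G;T)$ is $(0,0;0,0)$, a morphism $(a,b)$ is a monomorphism if and only if both $a$ and $b$ are; hence a subobject of an object $(A,B;m,n)$ is the same thing as a pair of subobjects $A_0\hookrightarrow A$, $B_0\hookrightarrow B$ for which $m$ restricts to a map $FA_0\to B_0$, $n$ restricts to a map $B_0\to GA_0$, and the triangle still commutes. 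The whole proof then amounts to producing, for a simple object, enough such compatible pairs to determine its shape, and conversely to checking that the two advertised families of objects admit no proper nonzero compatible pair.

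For the forward implication I would take a simple object $X=(A,B;m,n)$ and split into cases on $A$. If $A=0$ then $FA=GA=0$ because additive functors preserve the zero object, so $X=(0,B;0,0)$; here every subobject is of the form $(0,B_0;0,0)$ with $B_0\hookrightarrow B$ arbitrary, so $X$ is simple exactly when $B$ is simple in $\mathcal B$, which is the second listed form. If $A\neq 0$, I would exhibit two canonical subobjects. The pair $\bigl(A,\operatorname{Im}m;\bar m,\,n|_{\operatorname{Im}m}\bigr)$, with $\bar m\colon FA\twoheadrightarrow\operatorname{Im}m$, is a subobject (the restriction conditions are automatic and $n|_{\operatorname{Im}m}\circ\bar m=T_A$) and it is nonzero, so by simplicity it is all of $X$; hence $m$ is an epimorphism. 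Dually, $\bigl(0,\ker n;0,0\bigr)$ is a subobject (using $F(0)=0$ and that $n$ kills $\ker n$) and it is proper, so by simplicity it vanishes; hence $n$ is a monomorphism. With $m$ epi, $n$ mono and $n\circ m=T_A$, the pair $(m,n)$ is the image factorization of $T_A$, so $X\cong\widehat T(A)$.

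It remains to see $A$ is simple in $\mathcal A$. I would show $\widehat T$ carries monomorphisms to monomorphisms: given $A_0\hookrightarrow A$, naturality of $T$ gives a commuting square relating $T_{A_0}$ and $T_A$, and since $G$ is left exact $GA_0\hookrightarrow GA$ is monic, which forces $\operatorname{Im}T_{A_0}\to\operatorname{Im}T_A$ to be monic too; hence $\widehat T(A_0)\to\widehat T(A)$ is monic. Then a proper nonzero subobject $A_0$ of $A$ would yield a proper nonzero subobject $\widehat T(A_0)$ of $\widehat T(A)\cong X$, contradicting simplicity, so $A$ is simple. For the converse: $(0,L;0,0)$ is simple whenever $L$ is simple in $\mathcal B$, by the description of its subobjects above; and if $L$ is simple in $\mathcal A$ then any subobject of $\widehat T(L)=(L,\operatorname{Im}T_L;\bar m,\iota)$ has $\mathcal A$-component $0$ or $L$, where the value $L$ forces the $\mathcal B$-component to be all of $\operatorname{Im}T_L$ (compatibility with the epimorphism $\bar m$) and the value $0$ forces it to be $0$ (compatibility with the monomorphism $\iota$), so $\widehat T(L)$ is simple.

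The point that needs care — and essentially the only place genuine input is needed — is that $F$ is merely right exact, so $FA_0\to FA$ need not be a monomorphism and one cannot simply restrict $m$ along an arbitrary subobject of $A$. The way around this is to use only the two distinguished subobjects above, whose construction relies on the safe facts that additive functors preserve the zero object and that kernels and images behave well for the left exact functor $G$; once those subobjects are available, everything else is the bookkeeping of compatible pairs described in the first paragraph.
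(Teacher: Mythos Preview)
The paper does not give its own proof of this proposition; it simply quotes the statement from \cite{KV}. Your argument is correct and self-contained, relying only on the preceding proposition that $\mathcal C(F,G;T)$ is abelian with exact projections to $\mathcal A$ and $\mathcal B$.

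One phrase to tighten: in the first paragraph you describe a subobject of $(A,B;m,n)$ as a pair $A_0\hookrightarrow A$, $B_0\hookrightarrow B$ to which $m$ and $n$ ``restrict''. Since $F$ is only right exact, $FA_0\to FA$ need not be a monomorphism, so ``$m$ restricts'' is not literally meaningful; the honest condition is that there exist $m_0\colon FA_0\to B_0$ and $n_0\colon B_0\to GA_0$ making the cube commute. You are clearly aware of this (your final paragraph says exactly this), and every concrete subobject you actually use---$(A,\operatorname{Im}m)$, $(0,\ker n)$, and $\widehat T(A_0)$ for $A_0\hookrightarrow A$---is built with an explicit $m_0$, so none of the steps depend on the loose phrasing. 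With that wording corrected the proof is complete.
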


\subsection{Application to perverse sheaves}

Let $X$ be a complex manifold with a given Whitney stratification $\mathcal S$. We now sketch how the above construction applies to the category  $\mathcal M(X)$ of perverse sheaves with respect to $\mathcal S$(for details, see \cite{MV2}, Section 5).

Let $S$ be a closed stratum of complex dimension $d$. Denote
$$\Lambda_S = T_S^*X \qquad \mbox{and} \qquad \tilde{\Lambda}_S = \Lambda_S - \bigcup_{R \neq S} \overline{T_R^*X},$$
where $T_S^*X$ is the cotangent bundle of $X$ at $S$ and the union is over the other strata $R$.
We have:
\begin{itemize}
\item the right exact functor $\psi: \mathcal M(X -S) \rightarrow  \{ \mbox{local systems in} \ \tilde{\Lambda}_S \}$, called the nearby cycles;
\item the left exact functor $\psi_c: \mathcal M(X -S) \rightarrow  \{ \mbox{local systems in} \ \tilde{\Lambda}_S \}$, called the nearby cycles with compact support;
\item a functor $\Phi: \mathcal M(X) \rightarrow  \{ \mbox{local systems in} \ \tilde{\Lambda}_S \}$, called the vanishing cycles;
\item a natural transformation $var: \psi \rightarrow \psi_c$, called the variation.
\end{itemize}

We will later see how these functors are calculated in our plane case.

\vspace{0.2cm}

From this we can build a category $C:=\mathcal C (\psi, \psi_c, var )$, where the categories earlier denoted by $\mathcal A$ and $\mathcal B$ are, respectively, the category of perverse sheaves on $X- S$ and the 
category of local systems in $\tilde{\Lambda}_S$. 

Starting with a perverse sheaf $P^{\bullet}\in M(X)$, there is an associated diagram (\ref{eq:diagram}) in $C$, with $A=j^*P^{\bullet}$, $B=\Phi(P^{\bullet})$.
This actually gives an embedding of $M(X)$ in $C$:

\begin{prop} (\cite[Thm 5.3]{MV2}) $M(X)$ is equivalent to a subcategory $\tilde M(X)$ of $\mathcal C (\psi, \psi_c, var )$. In addition,  $\tilde M(X)$ is closed under taking subobjects.
\end{prop}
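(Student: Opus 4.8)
The plan is to exhibit an explicit functor $\Theta\colon M(X)\to\mathcal C(\psi,\psi_{c},var)$, show it is a fully faithful exact embedding, and then check that its essential image $\tilde{M}(X)$ is closed under subobjects. On objects $\Theta$ is the assignment already indicated above: a perverse sheaf $P^{\bullet}$ goes to the diagram~(\ref{eq:diagram}) with $A=j^{*}P^{\bullet}$, which is perverse on $X-S$ since $j$ is an open inclusion, with $B=\Phi(P^{\bullet})$, and with $m\colon\psi(j^{*}P^{\bullet})\to\Phi(P^{\bullet})$ and $n\colon\Phi(P^{\bullet})\to\psi_{c}(j^{*}P^{\bullet})$ the canonical maps of the theory of nearby and vanishing cycles, so that $n\circ m=var_{j^{*}P^{\bullet}}$ by the standard compatibility between these functors. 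Two preliminary facts are needed. First, the microlocal input that $\Phi(P^{\bullet})$ really is a local system on $\tilde{\Lambda}_{S}$, i.e. that the vanishing cycles of a perverse sheaf taken in a conormal direction avoiding the conormals to the other strata vary locally constantly over that open part of $\Lambda_{S}$. Second, the functoriality of $\Theta$, which is immediate from that of $j^{*}$, $\Phi$ and $var$. Exactness of $\Theta$ will then follow once full faithfulness is established, because the two forgetful functors $\mathcal C\to\mathcal A$ and $\mathcal C\to\mathcal B$ are exact and jointly faithful.

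Next I would reduce to a local, one-stratum-at-a-time situation: by induction on the number of strata it suffices to treat two strata, $X-S$ and a single closed stratum $S$, and since both $M(X)$ and $\mathcal C$ are glued in a manner local on $X$, full faithfulness may be checked near a point of $S$, where after passing to a normal slice one is in a product situation controlled by a normal Morse datum. Faithfulness is then clear: $P^{\bullet}$ is recovered from $j^{*}P^{\bullet}$ together with the gluing data along $S$, and the latter is precisely what the triangle $(\Phi(P^{\bullet});m,n)$ encodes, so a morphism dying on both components of $\Theta$ is zero. For fullness, given a morphism $(a,b)$ of diagrams I would invoke Deligne's recollement to obtain the unique $f\colon P^{\bullet}\to Q^{\bullet}$ with $j^{*}f=a$, and then verify $\Phi(f)=b$. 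This last verification is the crux, and it rests on the dictionary identifying the vanishing-cycle packaging $(\Phi;m,n)$ of the data in a neighbourhood of $S$ with the usual $(i^{*},i^{!},\text{adjunction maps})$ packaging of Deligne's gluing, in other words on the identification of $\mathcal C(\psi,\psi_{c},var)$ with the recollement of $D(X)$ from $D(X-S)$ and $D(S)$. Setting up this dictionary, which needs the Morse-theoretic description of $\psi,\psi_{c},\Phi$ and the check that the triangle $FA\to B\to GA$ carries exactly the gluing information, is where essentially all of the work lies; it is the content of \cite{MV1} and \cite{MV2}, and I expect it to be the main obstacle.

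Finally, for closure under subobjects, let $P^{\bullet}\in M(X)$ with $\Theta(P^{\bullet})=(A,B;m,n)$ and let $\iota\colon(A',B';m',n')\hookrightarrow(A,B;m,n)$ be a monomorphism in $\mathcal C$. Since the forgetful functor $\mathcal C\to\mathcal A$ is exact, $A'\hookrightarrow A=j^{*}P^{\bullet}$ is a sub-perverse-sheaf on $X-S$, and likewise $B'\hookrightarrow B$ is a sub-local-system on $\tilde{\Lambda}_{S}$. Using the dictionary above I would glue $A'$ with the near-$S$ data extracted from $(B';m',n')$ to form a complex ${P'}^{\bullet}$ on $X$; the point is that the conditions making this gluing a genuine perverse sheaf (Deligne's gluing conditions plus the perversity and support estimates) are inherited automatically from $\Theta(P^{\bullet})$, which satisfies them by hypothesis, because passing to a subobject is compatible with the two equivalent packagings and the forgetful functors are exact. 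Then $\Theta({P'}^{\bullet})\cong(A',B';m',n')$ and ${P'}^{\bullet}\hookrightarrow P^{\bullet}$, which is what we want. Once the dictionary of the previous paragraph is in place, this step, together with faithfulness and exactness, is comparatively formal.
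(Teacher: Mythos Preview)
Your proposal expends most of its effort on the first assertion, but in the paper this is simply a citation: the equivalence of $M(X)$ with a subcategory of $\mathcal C(\psi,\psi_c,var)$ is the content of \cite[Thm 5.3]{MV2} and is not reproved here. Your outline of that equivalence via recollement is reasonable as a summary of what \cite{MV2} does, though your reduction of fullness to Deligne gluing plus $\Phi(f)=b$ is not quite right: the recollement along $S$ glues $j^*P^\bullet$ with data on $S$, not directly with $\Phi(P^\bullet)$ on $\tilde\Lambda_S$, so the ``dictionary'' you invoke is doing real work that you do not spell out.

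For the closure under subobjects, which is the only part the paper actually argues, your approach diverges from the paper's and has a gap. You propose to take a subobject $(A',B';m',n')$ in $\mathcal C$, glue it to a complex $P'^\bullet$, and then assert that the conditions making $P'^\bullet$ perverse ``are inherited automatically''. But that inheritance is exactly the statement to be proved; you have relocated the question rather than answered it. The paper instead uses the explicit description of the essential image coming from \cite{MV2}: an object $\psi A\xrightarrow{m} B\xrightarrow{n}\psi_c A$ lies in $\tilde M(X)$ precisely when, for every $\alpha\in\pi_1(p^{-1}x)$, the monodromy $\mu_\alpha$ on $B$ satisfies
\[
\mu_\alpha-1 \;=\; m\circ I_\alpha\circ n,
\]
where $I_\alpha\colon\psi_c\to\psi$ is a fixed natural transformation. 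For a subobject in $\mathcal C$ the map $B'\to B$ is injective, and the maps $m',n',\mu_\alpha$ on $B'$ are the restrictions of those on $B$; hence the displayed identity on $B$ restricts to the same identity on $B'$. That one-line check is the whole argument. The key idea you are missing is that $\tilde M(X)$ is cut out by an \emph{equation} of endomorphisms of $B$, and equations of this type are automatically preserved under passage to sublocal systems.
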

\begin{proof}The only statement not in \cite[Thm 5.3]{MV2}, is the last statement, which follows from the observation that the condition that describes $\tilde M(X)$ is 
invariant with respect to taking subobjects. Let $C: \psi A\to \Phi A\to \psi_c A$ (with maps $m$ and $n$, respectively) be an object of of $\mathcal C (\psi, \psi_c, var )$ that corresponds to a perverse sheaf. Given $\alpha\in \pi_1(p^{-1}x)$ where $p$ is the projection $ \tilde{\Lambda}_S\to S$ there is defined a natural transformation $I_\alpha: \psi_c\to\psi$, and the condition is that for the monodromy action $
\mu_\alpha
$ on $\Phi A$ the identity \begin{equation}
\label{eq:condition}\mu_\alpha-1=m\circ I_\alpha\circ n\end{equation} holds. Now it is easy to see that for a subobject $\psi A'\to \Phi A'\to \psi_c A'$ of $C$ the map $\Phi A'\to \Phi A$ is an injection. Hence if (\ref{eq:condition}) holds for $\Phi A$ it will hold 
for $\Phi A'$.

\end{proof}

We have three functorial ways of extending a perverse sheaf $P^{\bullet} \in \mathcal M(X -S)$ to an object in $\mathcal M(X)$.
The corresponding element in  the category $\mathcal C(F,G;T)$ can be very nicely described.
Let $F = \psi$, $G = \psi_c$ and $T = var$.
 
\begin{prop}(see \cite{MV2}, pp.418)
\label{3functors}
The functors $^pj_!P^{\bullet}$, $^pj_{!*}P^{\bullet}$ and $^pj_*P^{\bullet}$ correspond, respectively, to the functors
$\widehat{F}$,$\widehat{T}$ and $\widehat{G}$ (defined in \eqref{tilt}).  That is, they are given as the following factorizations of 
$T: F P^{\bullet} \rightarrow G P^{\bullet}$:
\begin{itemize}
\item $^pj_!P^{\bullet}$: $F P^{\bullet} \xrightarrow{id} F P^{\bullet} \xrightarrow{T} G P^{\bullet}$;
\item $^pj_{!*}P^{\bullet}$: $F P^{\bullet} \twoheadrightarrow im(T) \hookrightarrow G P^{\bullet}$;
\item $^pj_*P^{\bullet}$: $F P^{\bullet} \xrightarrow{T} G P^{\bullet} \xrightarrow{id} G P^{\bullet}$.
\end{itemize}
\end {prop}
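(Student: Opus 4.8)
The plan is to obtain the three identifications from the universal properties of ${}^pj_!$, ${}^pj_*$ and ${}^pj_{!*}$, transported through the equivalence $\mathcal M(X)\simeq\tilde M(X)\subseteq\mathcal C(\psi,\psi_c,var)$ of the preceding proposition. The only genuinely geometric ingredient, which I would take directly from \cite{MV2}, is the computation of the vanishing-cycle datum attached to a $!$- or $*$-extension: for $P^\bullet\in\mathcal M(X-S)$ there are canonical isomorphisms $\Phi({}^pj_!P^\bullet)\cong\psi P^\bullet$ and $\Phi({}^pj_*P^\bullet)\cong\psi_c P^\bullet$, under which the structure maps $m,n$ of the triangle \eqref{eq:diagram} attached to ${}^pj_!P^\bullet$ (resp. to ${}^pj_*P^\bullet$) become $(\mathrm{id},var)$ (resp. $(var,\mathrm{id})$). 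Geometrically this reflects the vanishing of the $*$-restriction of ${}^pj_!P^\bullet$ to $S$ and of the $!$-restriction of ${}^pj_*P^\bullet$ to $S$, which collapses the exact sequences of \cite{MV2} relating the restriction to $S$ with nearby and vanishing cycles.

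Granting this, the $!$- and $*$-cases are formal. Recall from the construction of the equivalence that the exact functor $(A,B)\mapsto A$ on $\mathcal C(\psi,\psi_c,var)$ restricts, on $\tilde M(X)$, to the restriction functor $j^*\colon\mathcal M(X)\to\mathcal M(X-S)$, which is exact since $j$ is an open immersion. A short chase in \eqref{eq:diagram}, using naturality of $var$, shows that a morphism out of $\widehat F(A)$, resp. into $\widehat G(A)$, is uniquely and freely determined by its $\mathcal A$-component; hence $\widehat F$ is left adjoint and $\widehat G$ right adjoint to the projection $(A,B)\mapsto A$. On the topological side ${}^pj_!$ and ${}^pj_*$ are the left and right adjoints of $j^*$ on perverse sheaves (standard, cf. \cite{BBD}). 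So once $\widehat F(A)$ and $\widehat G(A)$ are known to lie in $\tilde M(X)$, uniqueness of adjoints yields $\widehat F(A)\cong{}^pj_!A$ and $\widehat G(A)\cong{}^pj_*A$, which is the assertion for ${}^pj_!$ and ${}^pj_*$. That $\widehat F(A)\in\tilde M(X)$ is exactly where the geometric input is used: the adjunction furnishes a canonical morphism $\widehat F(A)\to{}^pj_!A$ in $\mathcal C(\psi,\psi_c,var)$ whose $\mathcal A$-component is $\mathrm{id}_A$ and whose $\mathcal B$-component is the structure map $\psi A\to\Phi({}^pj_!A)$ of the triangle attached to ${}^pj_!A$; the geometric input identifies this map as an isomorphism, so the whole morphism is an isomorphism in $\mathcal C(\psi,\psi_c,var)$. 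The case of $\widehat G(A)$ is dual.

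For the intermediate extension I would use that ${}^pj_{!*}P^\bullet$ is, by definition, the image taken in the abelian category $\mathcal M(X)$ of the canonical morphism ${}^pj_!P^\bullet\to{}^pj_*P^\bullet$. Under the identifications just obtained this corresponds to the unique map $\widehat F(A)\to\widehat G(A)$ with $\mathcal A$-component $\mathrm{id}_A$, and a chase in \eqref{eq:diagram} forces its $\mathcal B$-component to be $var_A\colon\psi A\to\psi_c A$. Since the two projection functors from $\mathcal C(\psi,\psi_c,var)$ to $\mathcal M(X-S)$ and to the category of local systems on $\tilde\Lambda_S$ are exact, the image of this morphism in $\mathcal C(\psi,\psi_c,var)$ is the object with $\mathcal A$-component $A$ and $\mathcal B$-component $\mathrm{im}(var_A)$, carrying as its structure maps the canonical factorisation of $var_A$ through its image; that is, $\widehat T(A)$. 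Finally $\widehat T(A)$ is a subobject of $\widehat G(A)\cong{}^pj_*A\in\tilde M(X)$, and $\tilde M(X)$ is closed under subobjects, so $\widehat T(A)\in\tilde M(X)$; as the equivalence is exact, $\widehat T(A)$ represents ${}^pj_{!*}P^\bullet$.

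I expect the only real work to be the geometric ingredient isolated in the first paragraph: the identification of $\Phi({}^pj_!P^\bullet)$, $\Phi({}^pj_*P^\bullet)$ and the attached structure maps inside the microlocal formalism of \cite{MV2}, rather than for the naive nearby and vanishing cycles of a single holomorphic function. Once that is available, the remainder is bookkeeping with adjunctions and with the exactness of the two projection functors, both of which are already provided by the results recalled above.
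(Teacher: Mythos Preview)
The paper does not give its own proof of this proposition: it is stated with a bare reference to \cite{MV2}, p.~418, and no argument follows. So there is nothing in the paper to compare your attempt against.

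That said, your sketch is correct and is essentially the argument one extracts from \cite{MV2}. One remark: once you grant the ``geometric input''---that under the equivalence the object attached to ${}^pj_!P^\bullet$ has $\mathcal A$-component $P^\bullet$ and $\mathcal B$-component $\psi P^\bullet$ with structure maps $(\mathrm{id},var)$, and dually for ${}^pj_*$---the identifications with $\widehat F(P^\bullet)$ and $\widehat G(P^\bullet)$ are immediate by inspection, so the passage through adjunctions, while correct and elegant, is not strictly needed. Your handling of ${}^pj_{!*}$ as the image of $\widehat F(P^\bullet)\to\widehat G(P^\bullet)$, computed via exactness of the two projection functors and then placed back in $\tilde M(X)$ using closure under subobjects, is exactly the right way to finish.
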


\subsection{The category $ \mathcal C(\psi, \psi_c, var)$ in the plane case}
\label{section:plane1}
MacPherson and Vilonen \cite{MV3}  applied the previous ideas to the plane curve $S: y^m = x^n$. We will take $m=n$, so that $S$ corresponds to a central arrangement 
of $n$ lines $S_i = \{  x - \epsilon_n^i y = 0  \},\ i=1,...,n$ (where $\epsilon_n$ is a $n$:th primitive root of unity).

Consider the space $\C^2$ and the stratification $\{ 0 \} \subset S \subset \C^2$, where $S = \bigcup_{i=1}^{n} S_i$. This is a Whitney stratification.
We will first give a description of $\mathcal M(\C^2 - \{ 0 \})$. 
Define $\Lambda = T^*_{\C^2}\C^2 \cup \overline{T^*_S\C^2}- \{ 0 \}  \cup T^*_{ \{ 0 \}}\C^2$.

The standard hermitian metric allows us to identify $T^*_{S_i}\C^2$ (the cotangent bundle) with $T_{S_i}\C^2$ (the tangent bundle). Let $U_i$ be a neighborhood of the zero-section of $T^*_{S_i}\C^2$ and 
$V_i$ a neighborhood of $S_i$ in $\C^2$.
By the tubular neighborhood theorem there exists a diffeomorphism between $U_i$ and $V_i$. 
If $A$ is a local system in $\C^2 - S$, for each $\tilde{\Lambda}_{S_i}$, we can define a local system $\tilde{A_i}$ obtained by the pullback of the following maps between fundamental groups:
$$\pi_1(\tilde{\Lambda}_{S_i}) \xleftarrow{\cong} \pi_1(U_i - S_i) \xleftarrow{\cong} \pi_1(V_i - S_i) \rightarrow \pi_1(\C^2 - S).$$
Noticing that $\tilde{\Lambda}_{S_i}= S_i \times \C^*$ and considering the projection $\pi: \tilde{\Lambda}_{S_i} \rightarrow S_i$, we observe that $\tilde{\Lambda}_{S_i}$ is a trivial $\C^*$-bundle over $S_i$.
Thus, we get an isomorphism $\pi_1(\tilde{\Lambda}_{S_i}) \cong \pi_1(S_i) \times \pi_1(\C^*)$, by choosing a section of the bundle. From now on we will assume that this section and the base point are fixed. Let 
$\gamma_i$ be the generator of $\pi_1(\C^*)$ in $\pi_1(\tilde{\Lambda}_{S_i})$ and $\Gamma_i$ be the image of $\gamma_i$ in $\pi_1(\C^2 - S)$. 
If $A$ is a local system in $\C^2 - S$, then $A$ is a perverse sheaf in $\C^2 - S$.
In this situation, we get:

\begin{lem}(\cite[Lemma 1.1]{MV3})
\label{varsim}
We have $\psi(A)=\psi_c(A)= \tilde{A}_i$ on $\tilde{\Lambda}_{S_i}$ and the variation map $var: \tilde{A}_i \rightarrow \tilde{A_i}$ is given by $var(a)=\gamma_i(a)-a$. 
\end{lem}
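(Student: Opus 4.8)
Since this is \cite[Lemma 1.1]{MV3}, I will only sketch the argument; the point is that everything reduces to the classical one-variable picture. The functors $\psi$, $\psi_c$ and the natural transformation $var$ attached to the stratum $S_i$ are local on $S_i$ and depend only on a small neighbourhood of a point of $\tilde\Lambda_{S_i}$, so I would first fix a point $s\in S_i-\{0\}$ and work near it. Since the lines $S_j$ with $j\neq i$ meet $S_i$ only at the origin, there is a neighbourhood of $s$ in which $S$ coincides with $S_i$; in suitable holomorphic coordinates $(u,v)$ this neighbourhood is a polydisc $D_u\times D_v$ with $S=\{v=0\}$, so that $U=\C^2-S$ becomes $D_u\times D_v^{*}$ there. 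As $D_u$ is contractible, $\pi_1(D_u\times D_v^{*})\cong\pi_1(D_v^{*})\cong\Z$ is generated by the loop $\Gamma_i$ around $S_i$, and $A$ restricts over this neighbourhood to the pullback, under $D_u\times D_v^{*}\to D_v^{*}$, of a local system $\mathcal L$ on $D_v^{*}$ whose monodromy operator is the action of $\gamma_i$. Tracing through the chain
$$\pi_1(\tilde{\Lambda}_{S_i})\xleftarrow{\cong}\pi_1(U_i-S_i)\xleftarrow{\cong}\pi_1(V_i-S_i)\rightarrow\pi_1(\C^2-S)$$
then shows that, over $D_u$, the local system $\tilde A_i$ on $\tilde\Lambda_{S_i}=S_i\times\C^{*}$ is precisely this pullback, with the $\C^{*}$-monodromy given by $\gamma_i$.

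Next I would use that $\psi$, $\psi_c$ and $var$ are compatible with the external product by the constant perverse sheaf on the factor $D_u$, which reduces the whole statement to the one-variable inclusion $j\colon D_v^{*}\hookrightarrow D_v$ with the local system $\mathcal L$. There $\tilde\Lambda_{\{0\}}=T^{*}_{0}D_v-\{0\}\cong\C^{*}$, and unwinding the MacPherson--Vilonen definition of $\psi$ identifies $\psi(\mathcal L)$ with $\mathcal L$ transported to this conormal $\C^{*}$: its stalk is the stalk of $\mathcal L$ and its monodromy around the conormal circle is the monodromy operator of $\mathcal L$. Because $\mathcal L$ is a genuine local system on $D_v^{*}$, with no choice of data supported at the puncture entering the construction, the compactly-supported variant yields the same object, so $\psi(\mathcal L)=\psi_c(\mathcal L)$; pulling back to the plane gives $\psi(A)=\psi_c(A)=\tilde A_i$ on $\tilde\Lambda_{S_i}$. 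Finally, the natural transformation $var\colon\psi\to\psi_c$, evaluated on a local system, is not the identity but acts through \emph{monodromy minus one}, and with the orientation fixed above this is $a\mapsto\gamma_i(a)-a$.

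The geometry involved is elementary; the step I expect to take the most care is pinning down the MacPherson--Vilonen definitions of $\psi$, $\psi_c$ and $var$ precisely enough to run this argument --- in particular the identification of ``a local system on $\tilde\Lambda_{S_i}$'' with a sheaf on a transverse circle, the homological shift conventions, and the verification that these microlocal constructions commute with the external product along $D_u$. Once that bookkeeping is in place the one-variable computation, and hence the lemma, is immediate; the only remaining ambiguity is the sign convention distinguishing $var(a)=\gamma_i(a)-a$ from $a-\gamma_i(a)$.
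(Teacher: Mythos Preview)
Your sketch is correct and follows the standard line: localize near a smooth point of $S_i$, take a normal slice to reduce to the one-variable situation $j\colon D^*\hookrightarrow D$, and then read off $\psi=\psi_c=$ the local system and $var=\mu-1$ from the classical nearby/vanishing cycle picture. Note, however, that the paper does not actually prove this lemma; it is simply quoted as \cite[Lemma~1.1]{MV3}, so there is no ``paper's own proof'' to compare against. Your argument is essentially what one finds in \cite{MV3}, so there is no discrepancy to report.

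One small remark on what you flag as needing care: the compatibility of $\psi$, $\psi_c$, $var$ with the external product along the contractible factor $D_u$ is indeed the only nontrivial bookkeeping step, and in the MacPherson--Vilonen framework it is built into their construction (their functors are defined via Morse-theoretic local data on a normal slice, which is exactly your $D_v$). The sign ambiguity you mention is a genuine convention issue; the paper fixes it by the choice of $\gamma_i$ made just before the lemma, and your $var(a)=\gamma_i(a)-a$ matches that choice.
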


From this result we see that the local system $A$ on $\C^2 - S$ gives rise to an object in the category
$Q_{\Lambda}= \mathcal C(\psi, \psi_c, var)$, 
consisting of the pairs $(\tilde{A}_i, B_i)$ (where $B_i$ is a $\pi_1(S_i)$-module), and commutative diagrams
$$\xymatrix{
\tilde{A}_i \ar[rr]^{var} \ar[rd] & & \tilde{A}_i \\
 & B_i \ar[ru] &
}$$
$i=1, \ldots, n$.

All perverse sheaves can be described in this way:

\begin{prop}{\cite[Proposition 1.2.]{MV3}}
\label{TeorP1}
The category $\mathcal M(\C^2- \{ 0 \})$ is equivalent to $Q_{\Lambda}$. 
\end{prop}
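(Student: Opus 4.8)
The plan is to deduce this from the gluing equivalence quoted above from \cite[Thm 5.3]{MV2} together with the explicit description of the nearby and vanishing cycle functors in Lemma \ref{varsim}; the only real work is checking that the compatibility condition \eqref{eq:condition} imposes nothing beyond what is already recorded in $Q_\Lambda$.

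First I would fix the geometry. Restricting the stratification $\{0\}\subset S\subset\C^2$ to $X=\C^2\setminus\{0\}$ gives a two-step stratification with open stratum $\C^2\setminus S$ and closed stratum $S\setminus\{0\}$, whose connected components are the punctured lines $S_i$, each a copy of $\C^*$. Since $\C^2\setminus S$ is smooth and connected, $\mathcal M(\C^2\setminus S)$ is the category of local systems on $\C^2\setminus S$. Applying the cited equivalence of \cite[Thm 5.3]{MV2} to this $X$ and closed stratum, with $\mathcal A$ the local systems on $\C^2\setminus S$ and $\mathcal B$ the local systems on $\tilde\Lambda_{S\setminus\{0\}}=\bigsqcup_{i=1}^{n}\tilde\Lambda_{S_i}$, identifies $\mathcal M(\C^2\setminus\{0\})$ with the full subcategory $\tilde M$ of $\mathcal C(\psi,\psi_c,var)$ cut out by \eqref{eq:condition}. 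It remains to show that $\tilde M$ is equivalent to $Q_\Lambda$.

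Next I would unwind the functors using Lemma \ref{varsim}: on each component $\tilde\Lambda_{S_i}$ one has $\psi(A)=\psi_c(A)=\tilde A_i$ and $var(a)=\gamma_i(a)-a$, where $\gamma_i$ is the fibre monodromy of $\tilde A_i$ along the $\C^*$ in $p\colon\tilde\Lambda_{S_i}\to S_i$ and acts as $\Gamma_i$ does on $A$. In particular $\psi$ and $\psi_c$ are canonically identified and, under this identification, the natural transformation $I_{\gamma_i}\colon\psi_c\to\psi$ entering \eqref{eq:condition} is the identity. Since $\tilde\Lambda_{S_i}\cong S_i\times\C^*$ is a trivial $\C^*$-bundle, a local system $B_i$ on $\tilde\Lambda_{S_i}$ is the same as a $\pi_1(S_i)$-module equipped with one further commuting automorphism $\mu_{\gamma_i}$, its monodromy along the fibre. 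Thus an object of $\mathcal C(\psi,\psi_c,var)$ consists of a local system $A$ on $\C^2\setminus S$ and, for each $i$, such a pair $(B_i,\mu_{\gamma_i})$ together with a factorization $\tilde A_i\xrightarrow{m_i}B_i\xrightarrow{n_i}\tilde A_i$ of $\gamma_i-1$; and \eqref{eq:condition} says precisely that $\mu_{\gamma_i}=1+m_i\circ n_i$ for every $i$.

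Finally I would show this makes $\mu_{\gamma_i}$ redundant, so that the functor $\tilde M\to Q_\Lambda$ forgetting the $\mu_{\gamma_i}$ (retaining $B_i$ only as a $\pi_1(S_i)$-module with its maps $m_i,n_i$) is an equivalence. For the quasi-inverse one equips each $B_i$ with the automorphism $1+m_i\circ n_i$; here one must check that $1+m_i\circ n_i$ is invertible --- using $n_i\circ m_i=\gamma_i-1$ on $\tilde A_i$, a direct computation shows $1-m_i\gamma_i^{-1}n_i$ is a two-sided inverse --- and that it commutes with the $\pi_1(S_i)$-action, which holds because $m_i$ and $n_i$ are maps of local systems and $\tilde\Lambda_{S_i}$ is a product, so the assignment really does produce a local system on $\tilde\Lambda_{S_i}$ and an object of $\tilde M$. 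Functoriality and the verification that the two functors are mutually quasi-inverse are then routine. I expect this last step to be the only delicate point: recognizing that \eqref{eq:condition} is not a genuine restriction but simply reconstructs the fibre monodromy $\mu_{\gamma_i}$ from the diagram, so that dropping it from the data loses nothing.
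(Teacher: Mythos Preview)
The paper does not supply its own proof of this proposition: it is simply quoted from \cite[Proposition~1.2]{MV3}. Your proposal is therefore not competing against an argument in the paper but rather reconstructing one from the ingredients the paper has already recalled, and the reconstruction is correct. The strategy---apply the general gluing theorem of \cite{MV2} to $X=\C^2\setminus\{0\}$ with closed stratum $S\setminus\{0\}$, use Lemma~\ref{varsim} to make $\psi$, $\psi_c$ and $var$ explicit on each component $\tilde\Lambda_{S_i}$, and then observe that the compatibility condition~\eqref{eq:condition} does not constrain the data but merely reconstructs the fibre monodromy $\mu_{\gamma_i}=1+m_i n_i$ of $B_i$---is precisely the argument in \cite{MV3}, and your verification that $1-m_i\gamma_i^{-1}n_i$ inverts $1+m_i n_i$ (using $n_i m_i=\gamma_i-1$) is the standard one.

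One point you pass over a bit quickly is the assertion that, under the identification $\psi=\psi_c=\tilde A_i$ of Lemma~\ref{varsim}, the natural transformation $I_{\gamma_i}\colon\psi_c\to\psi$ of \cite{MV2} becomes the identity. This is true, but it is not a purely formal consequence of $\psi$ and $\psi_c$ taking the same value on $A$; it depends on how $I_\alpha$ is constructed (via the Morse-theoretic model for nearby cycles) and on the triviality of the normal geometry along $S_i\setminus\{0\}$. In a polished write-up you would want either to cite this identification explicitly from \cite{MV2,MV3} or to unwind the definition in the local model $S_i\times\C$.
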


We know that the monodromy of the stalk at a fix point $x_0\in \C^2-S$ defines a functor from rank 1 local systems on $\C^2-S$ to finite dimensional representations of $\pi_1(\C^2-S)$ on the vector space $\C e$.
Now we want to apply this result.
\subsection{Locally constant sheaves on $\C\setminus S$}
A presentation of $\pi_1(\C^2-S)$ is given by $\langle  \Gamma_1, \ldots, \Gamma_n \rangle / R$, where $R$ is the subgroup of the free group $\langle  \Gamma_1, \ldots, \Gamma_n \rangle$ generated by the relations 
$\Gamma_1 \Gamma_2 \ldots \Gamma_n = \Gamma_2 \ldots \Gamma_n \Gamma_1 = \Gamma_n \Gamma_1 \ldots \Gamma_ {n-1}$. 
Observing that an action of $\pi_1(\C^2-S)$ on a vector space $\C e$ is given by $\{ \Gamma_i e = a_ie \}$, $i=1, \ldots, n$ and that then automatically $\Gamma_1 \Gamma_2 \ldots \Gamma_n e = 
\Gamma_i \Gamma_{i+1} \ldots \Gamma_{i-1} e = \Pi_{i=1}^{n} a_i e$, we see that there is a bijective correspondence between local system of rank 1 in  $\C^2-S$ and multi-indices $a=(a_i, \ldots, a_n)$ in 
$\C^n$. We denote the locally constant sheaf on $\C^2-S$ corresponding to $a$ by $\mathcal L_{a}$.

The pull-back of $\mathcal L_{a}$ to $V_i - S_i \subset \C^2 - S$ will be a locally constant sheaf of rank 1 and corresponds to a representation of $\pi_1(V - S_i)= \pi_1(S_i) \times \pi_1(\C^*)$ on a one-dimensional 
complex space, $\tilde{A}_i \cong \C e_i$. The action of $\Gamma_i \in \pi_1(\C^*)$ is given by multiplication by $a_i: \Gamma_i e_i= a_i e_i$.

Hence, by Lemma \ref{varsim}, the variation map, $var:\tilde{A}_i \rightarrow \tilde{A}_i$, is given by multiplication by $a_i -1$: $var(e_i) = \Gamma_i e_i- e_i = a_i e_i - e_i = (a_i - 1)e_i$. Thus, in our case, 
the variation is either an isomorphism, if $a_i \neq 1$, or $0$, if $a_i=1$.

\vspace{0.2cm}

In particular, considering the inclusions 
$$\C^2 - S \xrightarrow{j^1} \C^2- \{ 0\} \xrightarrow{j^2} \C^2$$
and applying the results of Proposition \ref{3functors} we know that $^pj_*^1 \tilde{A}_i$ can be represented as the set of diagram of the following form:
\begin{equation}
\label{diagABC}
\xymatrix{
\tilde{A}_i=\C \ar[rr]^{a_i-1} \ar[rd]_{p_i=a_i-1} & & \tilde{A}_i=\C \\
 & B_i=\C \ar[ru]_{q_i=id} &
}
\end{equation}
Note that $B_i$ is determined by the condition that the diagram corresponds to a direct image $^pj_*^1$, since the map $B_i \rightarrow \tilde{A}_i$ is then the identity.   Now we can prove our first result.

\begin{prop}
\label{j1irre}
$^pj^1_*\mathcal L_{a}$ is irreducible in $\mathcal M(\C^2 - \{ 0\})$ if, and only if, $a_i \neq 1$, for all $i= 1, \ldots, n$.
\end{prop}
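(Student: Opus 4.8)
The plan is to reduce the statement to the characterization of irreducible objects in $\mathcal C(\psi,\psi_c,var)$ given in Proposition~\ref{irpor}, combined with the dictionary of Proposition~\ref{3functors}. Concretely, by Proposition~\ref{TeorP1} the category $\mathcal M(\C^2-\{0\})$ is equivalent to $Q_\Lambda$, so it suffices to decide when the object of $Q_\Lambda$ representing ${}^pj^1_*\mathcal L_a$ — namely the collection of diagrams \eqref{diagABC} for $i=1,\dots,n$ — is irreducible. Since ${}^pj^1_*\mathcal L_a={}^pj^1_*\mathcal L_a$ corresponds under the equivalence of Proposition~\ref{3functors} to $\widehat G(\mathcal L_a)$ (the functor $\widehat G$, with $G=\psi_c$), and since $\mathcal L_a$ is a rank~$1$ local system on $\C^2-S$, the first thing to check is that $\mathcal L_a$ is irreducible in $\mathcal A=\mathcal M(\C^2-S)$: a rank~$1$ local system has no nontrivial sub-local-systems, hence is simple as a perverse sheaf. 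So $L:=\mathcal L_a$ is an irreducible object of $\mathcal A$.

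Next I would compare $\widehat G(L)$ with $\widehat T(L)$. Proposition~\ref{irpor} tells us that the irreducible objects of $\mathcal C(\psi,\psi_c,var)$ coming from $\mathcal A$ are exactly those of the form $\widehat T(L)$ with $L$ irreducible; the other irreducibles are supported on the smaller stratum and cannot equal ${}^pj^1_*\mathcal L_a$, which restricts to $\mathcal L_a\neq 0$ on the open stratum. Therefore ${}^pj^1_*\mathcal L_a$ is irreducible if and only if $\widehat G(L)\cong\widehat T(L)$, i.e.\ if and only if the factorization $FP^\bullet\xrightarrow{T}GP^\bullet\xrightarrow{\mathrm{id}}GP^\bullet$ coincides with $FP^\bullet\twoheadrightarrow\mathrm{im}(T)\hookrightarrow GP^\bullet$. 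Looking at each stratum $S_i$ separately, this is the condition that the map $n_i\colon B_i\to\tilde A_i$ in \eqref{diagABC} be injective \emph{and} that $B_i=\mathrm{im}(T_i)$, equivalently that $T_i=var_i$ be surjective; since everything here is a map $\C\to\C$, surjectivity of $var_i$ is the same as $var_i\neq 0$, which by the computation recalled just before the proposition ($var(e_i)=(a_i-1)e_i$) holds precisely when $a_i\neq 1$. Running this over all $i=1,\dots,n$ gives the claimed criterion.

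The one point that needs a little care — and is the main obstacle — is justifying that $\widehat G(L)\not\cong\widehat T(L)$ whenever some $a_i=1$, i.e.\ producing an honest nonzero proper subobject of ${}^pj^1_*\mathcal L_a$ in that case, and checking it lies in $\tilde M(\C^2-\{0\})$ rather than merely in $\mathcal C(\psi,\psi_c,var)$. When $a_i=1$ the diagram \eqref{diagABC} over $S_i$ degenerates ($p_i=a_i-1=0$, $q_i=\mathrm{id}$), and one exhibits the subobject given over $S_i$ by the skyscraper-type diagram $0\to B_i'\to 0$ with $B_i'=\C\subset B_i$ — which is a legitimate subobject in $\mathcal C(\psi,\psi_c,var)$, and by Proposition~2.8 (closure of $\tilde M(X)$ under subobjects) automatically corresponds to a genuine perverse sheaf. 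Conversely, when all $a_i\neq 1$ I would verify directly from Proposition~\ref{irpor} that the diagram \eqref{diagABC} is exactly $\widehat T(\mathcal L_a)$ stratum by stratum, so no proper subobject exists; the verification is just the observation that with $var_i$ an isomorphism the surjection $\psi A\twoheadrightarrow\mathrm{im}(var)$ and the map into $\psi_c A$ are both isomorphisms, matching \eqref{tilt}. This is essentially bookkeeping once the functorial identifications of Propositions~\ref{3functors} and \ref{irpor} are in place.
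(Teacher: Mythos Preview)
Your proposal is correct and follows essentially the same approach as the paper: invoke Proposition~\ref{TeorP1} to translate to $Q_\Lambda$, then apply Proposition~\ref{irpor} to see that the diagrams \eqref{diagABC} are irreducible exactly when each $a_i-1\neq 0$. The paper's proof is a two-line application of these propositions; your version simply makes explicit the intermediate steps (irreducibility of the rank~$1$ sheaf $\mathcal L_a$, the identification $\widehat G(L)\cong\widehat T(L)$ iff each $var_i$ is surjective, and the exhibition of a subobject when some $a_i=1$) that the paper leaves implicit.
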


This result may be easily modified to give a proof of Proposition \ref{prop:Clc}.

\begin{proof}
By Proposition \ref{TeorP1}, $^pj^1_*\mathcal L_{a}$ corresponds to the set of all diagrams \eqref{diagABC} and, by Propostion \ref{irpor}, these diagrams represent irreducible objects exactly when $a_i - 1 \neq 0$, 
for all $i= 1, \ldots, n$.
\end{proof}

Now we want to calculate $^pj_*\mathcal L_{a}= {^pj_*^2(^pj^1_* \mathcal L_{a})}$.
We need the following result from \cite{MV3}, that we have modified to our situation ($m=n$, and a rank 1 bundle).

\begin{lem}{\cite[Lemma 2.2. and Lemma 2.3.]{MV3}}
\label{2em1}
Given an object $P^\bullet$ in $\mathcal M(\C^2 - \{ 0 \})$, described by a set of diagrams \eqref{diagABC} (so corresponding to a sheaf $\mathcal L_a$), we have:
\begin{itemize}
\item $\psi(P^\bullet)=cok(A \xrightarrow{(p_1, \ldots, p_n)} B_1 \oplus \ldots \oplus B_n)$;
\item $\psi_c(P^\bullet)=ker(B_1 \oplus \ldots \oplus B_n \xrightarrow{(q_1, \ldots, q_n)} A)$.
\end{itemize}
\label{var}
The map $var: \psi(P^\bullet) \rightarrow \psi_c(P^\bullet)$ is given on $B_l=\C e_l$ by 
$$var(e_l) = (- \Sigma_{i=l+1}^{l+n}p_i a_{i-1} \ldots a_{l+1}e_i) + \Pi_{i=1}^n a_ie_l - e_l.$$ 
\end{lem}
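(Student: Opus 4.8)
The plan is to obtain all three assertions as the specialization, to $m=n$ and rank one, of the general computations of MacPherson and Vilonen in \cite[Lemmas 2.2 and 2.3]{MV3}, which treat an arbitrary local system on the complement of the curve $y^m=x^n$. Thus the substantive part will be to transcribe their formulas under the dictionary of \S\ref{section:plane1} (where $p_i=a_i-1$ and $q_i=\mathrm{id}$) and, for the variation, to fix and then track a normalization carefully.

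For $\psi$ and $\psi_c$: the origin is the minimal stratum, and $P^\bullet$ is glued from the diagrams \eqref{diagABC} along the lines $S_i$, with gluing data $p_i\colon A\to B_i$ and $q_i\colon B_i\to A$. Unwinding the definitions of the nearby cycles at $0$ in terms of this gluing data --- a local Mayer--Vietoris computation on a slice transverse to $\{0\}$ --- the pieces $(\tilde A_i,B_i)$ assemble into the two-term complex $A\xrightarrow{(p_1,\dots,p_n)}\bigoplus_{i=1}^n B_i$ and its dual $\bigoplus_{i=1}^n B_i\xrightarrow{(q_1,\dots,q_n)} A$; the right exact functor $\psi$ returns the cokernel of the first map and the left exact functor $\psi_c$ the kernel of the second, which is \cite[Lemma 2.2]{MV3} with $m=n$.

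For the variation: by Lemma \ref{varsim} and \eqref{eq:condition}, $var$ at any stratum is ``monodromy minus identity'', and at the origin the operative monodromy $\mu$ is that of the $\C^*$-factor of $\tilde\Lambda_{\{0\}}$, transported into $\C^2-S$ through the fixed section. This loop encircles each line $S_1,\dots,S_n$ once, but in a definite cyclic order; following $e_l\in B_l$ around it, each crossing of $S_i$ contributes $-p_i$ weighted by the product $a_{i-1}\cdots a_{l+1}$ of the monodromies of the lines already passed (an empty product, hence $1$, at the first crossing), while completing the loop multiplies by $\prod_{i=1}^n a_i$. Adding the $n$ contributions, with indices read modulo $n$ so that $i$ runs over $l+1,\dots,l+n$, gives
$$
var(e_l) = -\sum_{i=l+1}^{l+n} p_i\, a_{i-1}\cdots a_{l+1}\, e_i + \Bigl(\prod_{i=1}^{n} a_i\Bigr) e_l - e_l,
$$
which is \cite[Lemma 2.3]{MV3} in the present normalization.

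The hard part will be exactly this last bookkeeping: the geometry pins the formula down only up to a simultaneous relabeling of the lines and a reindexing of the partial products, so one must commit to the conventions fixed in \S\ref{section:plane1} for the $\Gamma_i$ and for the section of $\tilde\Lambda_{S_i}$ and respect them throughout. As a safeguard I would verify two identities forced by $var$ being a natural transformation $\psi\to\psi_c$: that $(q_1,\dots,q_n)\bigl(var(e_l)\bigr)=0$ for each $l$, so that the values really lie in $\psi_c(P^\bullet)=\ker\bigl((q_1,\dots,q_n)\bigr)$, and that $\sum_{i} p_i\, var(e_i)=0$, so that $var$ descends to $\psi(P^\bullet)=\mathrm{cok}\bigl((p_1,\dots,p_n)\bigr)$; both are short calculations with the displayed formula.
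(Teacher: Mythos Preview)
The paper gives no proof of this lemma: it is stated as a citation of \cite[Lemmas 2.2 and 2.3]{MV3}, specialized to $m=n$ and rank one, and used as a black box thereafter. Your proposal correctly identifies this and sketches exactly that specialization, so there is nothing to compare on the level of argument; your added consistency checks (that the displayed formula lands in $\ker(q_1,\dots,q_n)$ and kills the image of $(p_1,\dots,p_n)$) are not in the paper but are a sensible way to confirm the conventions have been tracked correctly.
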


Here $p_i=a_i-1$. To read this formulae, note first that all the indices are considered as integers modulo $n$.
Then, which is not explicit in \cite{MV3}, we have to make two conventions.
First, if the sequence $a_{i-1} \ldots a_{l+1}$ is an increasing sequence (in indices, where this happens for $i=l+1$) we consider that it is equal to $1$.
Second,  for the same sequence,  if we have only two elements such that $a_{i-1} = a_{l+1}$(that is $i=l+2$), then we let $a_{i-1}...a_{l+1} = a_{l+1}$.

The map
\begin{equation}
\label{eq:varmatrix}
\oplus B_i = \C^n \rightarrow \psi(\mathcal L_{a}) \xrightarrow{var} \psi_c(\mathcal L_{a}) \rightarrow \C^n = \oplus B_i
\end{equation}
will then be given by the matrix $M_n=(d_{rc})$, ($r,c=1, \ldots, n$) where:

\begin{equation}
\label{formag}
d_{rc} = 
\left\{\begin{array}{l}

         (1-a_r)\Pi_{i=1}^{r-1}a_i\Pi_{i=c+1}^na_i, \quad \mbox{if} \quad r<c \\
         -1+ \Pi_{i=1}^{n}a_i/a_c, \quad \mbox{if} \quad r=c \\
         (1-a_r)\Pi_{i=c+1}^{r-1}a_i, \quad \mbox{if} \quad  r>c
\end{array} \right. 
\end{equation}
(Here we have reverted to standard use of indices and products.)
\begin{example}
For $n=4$, the matrix $M_{4}$ is given by 
\[ \left[ \begin{array}{cccc}
-1 + a_2a_3a_4 & (1-a_1)a_3a_4 & (1-a_1)a_4 & 1-a_1 \\
1-a_2 & -1+a_1a_3a_4 & (1-a_2)a_1a_4 & (1-a_2)a_1 \\
(1-a_3)a_2 & 1-a_3 & -1 + a_1a_2a_4 & (1-a_3)a_1a_2\\ 
(1-a_4)a_2a_3 & (1-a_4)a_3 & 1-a_4 & -1 + a_1a_2a_3
\end{array} \right]\]
\end{example}

\section{Decomposition}

\subsection{Irreducibility}

We will now prove Theorem \ref{j2irre}. There is a homotopy equivalence of stratified spaces from the case of $n$ arbitrary distinct lines through the origin to the situation of $S$ as in Section \ref{section:plane1}. Hence we may assume that we are in that situation.

The computational part is the following lemma. 

\begin{lem}
\label{propdetm} Assume that $n\geq 2.$
Let $M'_{n-1}=M'_{n-1}(a_1,...,a_n)$ be the matrix obtained from $M_n$ in (\ref{formag}), by deleting the first column and the last row. Then, 
$$\det(M'_{n-1})=(-1)^{n-1} (-1 + a_1)(-1 + \Pi_{i=1}^n a_i)^{n-2}.
$$
\end{lem}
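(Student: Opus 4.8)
The plan is to compute $\det(M'_{n-1})$ by row and column operations that reveal a large amount of shared structure in the entries of $M_n$. The key observation is that, setting $P:=\prod_{i=1}^n a_i$, every off-diagonal entry $d_{rc}$ has the factor $(1-a_r)$, and the diagonal entry is $d_{rr}=-1+P/a_r$; moreover the entries in a fixed row $r$ (for $r\neq c$) differ only by the monomial prefactor in the $a_i$'s. So the first step is to factor, as far as possible, powers of the $a_i$ out of the rows and columns of $M'_{n-1}$. Concretely, after deleting column $1$ and row $n$, I would pull a common monomial out of each column $c\in\{2,\dots,n\}$ and possibly out of each row $r\in\{1,\dots,n-1\}$, tracking the resulting scalar factor $\prod a_i^{\,e_i}$; one should check that these exponents telescope to zero (the claimed determinant is a polynomial with no $a_i$ in the denominator and with the very simple leading behaviour $(-1+a_1)(-1+P)^{n-2}$), which is a good sanity check on the bookkeeping.

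After this normalization the matrix should become something like $D + v w^{t}$ plus a sparse correction, or more precisely a matrix whose $(r,c)$ entry for $r\neq c$ depends on $r$ and $c$ in a separable way, so that subtracting adjacent columns (or adjacent rows) collapses most entries to zero. The second step is therefore to perform differences of consecutive columns, $C_c\mapsto C_c-\lambda_c C_{c+1}$ for suitable scalars $\lambda_c$ built from the $a_i$, turning the bulk of the matrix into an upper- or lower-triangular shape with a single non-triangular column (or row) left over. The determinant then becomes (up to the recorded scalar and a sign) a product of the resulting diagonal entries times one remaining $2\times 2$ or cofactor expansion. I expect the diagonal entries produced this way to be, up to units, equal to $(-1+P)$, which accounts for the exponent $n-2$, while the leftover cofactor contributes the factor $(-1+a_1)$ and the global sign $(-1)^{n-1}$.

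An alternative, and possibly cleaner, route is induction on $n$: expand $\det(M'_{n-1})$ along its last column (the one indexed by $c=n$, whose entries are $d_{rn}=(1-a_r)\prod_{i=1}^{r-1}a_i$ for $r<n$, a particularly simple column), and try to identify the resulting $(n-2)\times(n-2)$ minors with $M'_{n-2}$ evaluated at a shifted or rescaled argument, e.g. replacing $(a_{n-1},a_n)$ by their product. If the recursion closes in the form $\det(M'_{n-1}) = (\text{explicit unit})\cdot(-1+P)\cdot\det(M'_{n-2})$ with the base case $n=2$ giving $\det(M'_1)=d_{11}=-1+a_1$ directly (note $P/a_1=a_2$ is cancelled? — more precisely for $n=2$ one checks $-1+a_1$ is exactly the claimed value), then the formula follows immediately by multiplying the recursion.

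The main obstacle, in either approach, is purely the combinatorics of the index ranges: the two special conventions stated after Lemma \ref{var} (the empty product equals $1$, and the two-term product $a_{i-1}\cdots a_{l+1}$ with $i=l+2$ equals $a_{l+1}$) mean the formula \eqref{formag} for $d_{rc}$ has three cases, and the row/column operations must be checked to behave uniformly across the $r<c$, $r=c$, $r>c$ regimes and across the wrap-around in the indices mod $n$. Keeping the exponent bookkeeping consistent — so that the extracted scalar genuinely cancels and one is left with a \emph{polynomial} answer — is where the computation is most error-prone; I would verify the final formula against the displayed $M_4$ (deleting column $1$ and row $4$, a $3\times 3$ determinant that must equal $-(-1+a_1)(-1+a_1a_2a_3a_4)^2$) before trusting the general manipulation.
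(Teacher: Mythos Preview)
Your inductive route is the one the paper takes, and your instinct to merge two of the $a_i$ into their product is exactly right; but the execution you sketch (Laplace expansion along the last column) would leave you with $n-1$ minors and no obvious mechanism for collapsing them to a single copy of $M'_{n-2}$. The paper instead expands $\det(M'_n)$ (built from $M_{n+1}$) along its \emph{last row}, after observing that the last two columns of $M'_n$ are proportional---one is $a_{n+1}$ times the other---in every row except that bottom one. Hence every minor in the row expansion that still contains both of these columns vanishes, and only the two terms attached to the last two column positions survive. Each of those two surviving $(n-1)\times(n-1)$ minors is, up to scaling one column by $a_{n+1}$, exactly $M'_{n-1}(a_1,\dots,a_{n-1},\tilde a_n)$ with $\tilde a_n:=a_na_{n+1}$, so the induction hypothesis applies directly and the recursion $\det(M'_n)=-(-1+\prod_{i=1}^{n+1}a_i)\det(M'_{n-1})$ falls out after a two-line computation. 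This near-proportionality of adjacent columns is the one idea your plan is missing; without it the induction does not close.

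A small slip in your base case: for $n=2$, deleting column $1$ and row $2$ from $M_2$ leaves the entry $d_{12}$, not $d_{11}$; one has $d_{12}=1-a_1$, which matches $(-1)^{1}(-1+a_1)(-1+a_1a_2)^{0}=1-a_1$. Your suggested sanity check against the displayed $M_4$ is worthwhile and will confirm the sign.
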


\begin{proof}[Proof of Lemma \ref{propdetm}]
We will do induction on $n$.  For $n=2$ the statement is a short calculation.
Let $n>2$ and compare the last two columns of the matrix $M_{n+1}$. Excluding the elements in the last two rows, columns $n$ and $n+1$ only differ by a multiplicative factor (the factor $a_{n+1}$ in column $n$). 
This means that, in the computation of $\det(M'_{n})$, if we expand along the last row, all the minors, with the exception of the last two $D_{n,n-1}$ and $D_{n,n}$, are going to be zero. Note that the corresponding elements $d_{n, n-1}=-1+a_1\cdots a_{n-1}a_{n+1}$ and $d_{n, n}=(1-a_{n})a_1\cdots a_{n-1}$.

Comparing the matrix $M'_{n-1}$ and the matrix $M'_{n}$  we can see that:
\begin{itemize}
\item the last column of $M'_{n-1}$ is identical to the last column of $M'_n$ (excluding the elements in the last rows);
\item for entries $d_{rc}$ in $M'_{n-1}$, such that $r \leq c+1$, the last factor is $a_n$, while in $M'_{n}$ it is $a_{n+1}$ (with the exception of the last column, mentioned above);
\item for the other entries there are no differences.
\end{itemize}

Now set $\tilde{a}_n=a_n a_{n+1}$.
We can proceed to the computation of $M'_{n}$. We have seen that we have only two minors to consider, associated to the following entries:
\begin{itemize}
\item $d_{n, n-1}$: we observe that (considering the definition of $\tilde{a}_n$ above) the minor $D_{n,n-1}$ has the same entries as $M'_{n-1}(a_1,...,\tilde a_n)$, therefore this minor is covered by the induction hypothesis;  
\item $d_{n,n}$: again we see that the the minor $D_{n,n}$ obtained differs from $M'_{n-1}(a_1,...,\tilde a_n)$ only in its last column, where all the entries are multiplied by $a_{n+1}$, 
therefore its determinant is given by $a_{n+1}\det(M'_{n-1}(a_1,...,\tilde a_n))$.
\end{itemize}

In conclusion, noting that by the induction hypothesis,
$\det(M'_{n-1}(a_1,...,\tilde a_n))= (-1)^{n-1}(-1+a_1)(-1 + \Pi_{i=1}^{n+1} a_i)^{n-2}$, we find that
\begin{eqnarray*}
&&\det(M'_{n})= (-1)^{n+n-1} d_{n,n-1} \det(M'_{n-1})    + (-1)^{n+n} d_{n,n} a_{n+1} \det(M'_{n-1})\\
&=&((-1)^{n}(-1 + \Pi_{i=1}^{n+1} a_i)\det(M'_{n-1}) \\
&=&
 (-1)^{n}(-1+a_1)(-1 + \Pi_{i=1}^{n+1} a_i)^{n-1}.
\end{eqnarray*}

\end{proof}

\begin{proof}[Proof of Theorem \ref{j2irre}]
Assume that the numerical conditions of the theorem hold, so that in particular $a_i\neq -1,\ i=1,...,n$ . Then, by Proposition \ref{j1irre}, $P^\bullet={^pj_*^1\mathcal L_a}$ is irreducible, and described by the set of diagrams \eqref{diagABC}. 
Hence, according to 
Lemma \ref{2em1}, ${^pj_ *^2 P^\bullet}$ is described by the diagram 
\begin{equation}
\label{eq:varproof}
\xymatrix{
\psi(P^\bullet) \ar[rr]^{var} \ar[rd] & &  \psi_c(P^\bullet)\\
 & \Phi(P^\bullet) \ar[ru]_{=} &
}
\end{equation}

Since also, by assumption,  $\Pi_{i=1}^n a_i\neq -1$,  the preceding Lemma \ref{propdetm} implies that the rank of $M'_{n-1}$ is $n-1$, and by (\ref{eq:varmatrix}) and Lemma \ref{2em1}, this implies that $var$ is an isomorphism. 
By Proposition \ref{irpor}, the diagram $(\ref{eq:varproof})$ is irreducible, since it is of the form $\hat{T}(L)$. Hence we have proved that $^pj_ *^2({} ^p{j_*}^1\mathcal L_a)={}^pj_*\mathcal L_a$ is irreducible. But $j$ is an affine morphism between affine varieties and so $Rj_*$ is t-exact and consequently $^pj_* = Rj_*$ (see \cite [Corollary 5.2.17] {BBD}).

Conversely, assume that ${^pj_* \mathcal L_a}$ is irreducible. If, say $a_1=1$, then $^pj_*^1 \mathcal L_a$ is not irreducible by Proposition \ref{j1irre}, and, since $^pj_*^2$ is t-exact (see \cite[Proposition 1.4.16]{BBD}), 
not $^pj_*\mathcal L_a$ either. Hence $a_i \neq 1$, $i=1, \ldots, n$. If then $\Pi_{i=1}^n a_i = 1$, $var$ is not an isomorphism, and again, by Proposition \ref{irpor}, ${^pj_* \mathcal L_a}$ is not irreducible (note that 
we know by Lemma \ref{2em1} that $\psi(P^\bullet)$ is non-zero).

\end{proof}

\subsection{Decomposition Factors}

If the irreducibility conditions for $^pj_*^1 \mathcal L_a$ and $Rj_*\mathcal L_a$ are not satisfied, we can still ask for the number of decomposition factors of these objects.
Let $c(P^\bullet)$ represent the number of decomposition factors of the perverse sheaf $P^\bullet$.

\begin{thm}
\label{teopr}
Assume that $a_1, \ldots, a_k = 1$ and $a_{k+1}, \ldots, a_n \neq 1$.
\begin{itemize}
\item If $k=n$, then $c(Rj_*\mathcal L_a)= 2n$.
 \item If $k<n$ and $\Pi_{i=1}^n a_i = 1$, then $c(Rj_*\mathcal L_a)= n+k-1$.
 \item If $k<n$ and $\Pi_{i=1}^n a_i \neq 1$, then $c(Rj_*\mathcal L_a)=k+1$.
\end{itemize}
\end{thm}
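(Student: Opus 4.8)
The plan is to reduce everything to counting irreducible constituents in the quiver category $\mathcal{C}(\psi,\psi_c,var)$, exploiting that the two projections $(A,B)\mapsto A$ and $(A,B)\mapsto B$ are exact (the first Proposition of Section 2) together with Proposition \ref{3functors} and Proposition \ref{irpor}. Since $j^1$ and $j^2$ are both affine inclusions between affine varieties, $Rj_*\mathcal L_a={}^pj^2_*({}^pj^1_*\mathcal L_a)$ as in the proof of Theorem \ref{j2irre}, so I work throughout with the quiver description \eqref{diagABC} of $P^\bullet:={}^pj^1_*\mathcal L_a$ on $\C^2\setminus\{0\}$ and then with diagram \eqref{eq:varproof} on $\C^2$.

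First I would handle $c({}^pj^1_*\mathcal L_a)$: by Proposition \ref{TeorP1} this object is the tuple of diagrams \eqref{diagABC}, and a decomposition series in $\mathcal M(\C^2\setminus\{0\})$ corresponds to one in $Q_\Lambda$, which decomposes coordinate-wise over the $n$ branches $S_i$. For indices $i>k$ the local diagram has $p_i=a_i-1\neq 0$ and is irreducible of type $\widehat{T}(L)$ by Proposition \ref{irpor}. For $i\le k$ the diagram is $\C\xrightarrow{0}\C$ through $B_i=\C$ with $q_i=\mathrm{id}$; this is not of the form $\widehat{T}(L)$, and one checks it has exactly two constituents: the subobject $(0\to B_i\to 0)$ of type (b) in Proposition \ref{irpor}, with quotient $\widehat{T}(\tilde A_i)$. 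Hence $c({}^pj^1_*\mathcal L_a)=(n-k)+2k=n+k$; call the two constituents coming from branch $i\le k$ by $K_i$ (the $B$-type piece) and $Q_i$ (the $\widehat T$-piece). This already gives the combinatorial skeleton I need.

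Next I would compute $c(Rj_*\mathcal L_a)$ by pushing forward to $\C^2$. The functor ${}^pj^2_*$ is exact, so I apply it termwise to a composition series of $P^\bullet$ and count constituents of each ${}^pj^2_*$ of an irreducible piece, being careful that ${}^pj^2_*$ of an irreducible need not be irreducible. For each irreducible constituent $N$ of $P^\bullet$ I must compute the diagram $\psi(N)\xrightarrow{var}\psi_c(N)$ with middle term $\Phi(N)$ via Lemma \ref{2em1}, then count constituents of the resulting object in $\mathcal{C}(\psi,\psi_c,var)$ on $\C^2$ using Proposition \ref{irpor}: the number of constituents equals $1+\dim\mathrm{cok}(var)$ plus the contribution of $\ker$ inside the kernel-type pieces — more precisely, for a quiver object $\psi N\xrightarrow{var}\psi_c N$ through $\Phi N$, the Jordan–Hölder length in $\mathcal C$ is (number of $\widehat{T}$-type composition factors of $\psi N/\!\!\sim$) $+\dim\Phi N-\dim\mathrm{im}$-part, which I would make precise as a lemma. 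The key numerical input is Lemma \ref{propdetm}: the relevant matrix $M'_{n-1}$ has rank $n-1$ exactly when $a_1\neq 1$ and $\Pi a_i\neq 1$, and its corank governs $\dim\ker(var)=\dim\mathrm{cok}(var)$ in the generic branch-pattern; for the pieces $K_i,Q_i$ with $i\le k$ one computes $\psi,\psi_c$ directly from the vanished maps $p_i=q_i=0$. Assembling: in case $k=n$ every branch contributes a $0$-variation piece, $var\equiv 0$ on the relevant spaces, and each of the $2n$ constituents of $P^\bullet$ pushes to an irreducible, giving $2n$. In case $k<n$ the $n-k$ good branches contribute a single block whose $var$ is governed by a $M'_{n-1}$-type determinant; $\Pi a_i=1$ forces one extra $1$-dimensional kernel/cokernel, producing one additional constituent beyond the $k<n$, $\Pi a_i\neq1$ count — and careful bookkeeping of the $k$ bad branches (each splitting $K_i$ off, contributing its own skyscraper-type factor) yields $n+k-1$ versus $k+1$ respectively.

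The main obstacle I expect is the middle step: ${}^pj^2_*$ applied to a composition series of $P^\bullet$ does \emph{not} simply add lengths, because ${}^pj^2_*$ is not fully faithful at the level of the gluing category and an irreducible constituent of $P^\bullet$ can acquire new constituents at the origin under ${}^pj^2_*$ (it is the $0$-th stratum where the vanishing cycle $\Phi$ lives). So I cannot just say $c(Rj_*\mathcal L_a)=\sum_N c({}^pj^2_* N)$ naively; I need instead to work with the full diagram \eqref{eq:varproof} for $P^\bullet$ itself, compute $\psi(P^\bullet),\psi_c(P^\bullet),\Phi(P^\bullet)$ and the map $var$ (whose matrix, on the good-branch part, is exactly the $(n-k)\times(n-k)$ analogue of $M_n$ from \eqref{formag}, with the $k$ bad branches contributing zero rows/columns), and then count constituents of this single quiver object using Proposition \ref{irpor}. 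Concretely the length is then $\bigl(\text{JH-length of }\psi(P^\bullet)\text{ in }\mathcal A\bigr)-\mathrm{rk}(var)+\dim\ker(var)+\dim\mathrm{cok}(var)$ minus overlaps, and the arithmetic of $\mathrm{rk}(var)$ from Lemma \ref{propdetm} (rank $n-k-1$ when $\Pi a_i=1$, rank $n-k$ otherwise, on the good part) together with the length-$2$ contribution of each bad branch to $\psi(P^\bullet)$'s Jordan–Hölder filtration is what produces the three cases $2n$, $n+k-1$, $k+1$. Verifying these three counts against small cases ($n=2$, $k=0,1,2$) would be the natural sanity check before trusting the general formula.
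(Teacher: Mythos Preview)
Your proposal contains two genuine errors that derail the argument.

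First, the category $Q_\Lambda$ does \emph{not} decompose coordinate-wise over the branches $S_i$. An object of $Q_\Lambda = \mathcal C(\psi,\psi_c;var)$ carries a single local system $A$ on $\C^2\setminus S$ together with the whole tuple $(B_1,\ldots,B_n)$; the functors $F=G$ send $A\mapsto(\tilde A_1,\ldots,\tilde A_n)$ simultaneously. Consequently $\widehat T(\mathcal L_a)$ is \emph{one} irreducible object, with $B_i=\mathrm{im}(a_i-1)$ for every $i$ at once --- it is precisely the object $D_{II}$ below. The good branches do not each contribute a separate composition factor. The correct length is $c({}^pj^1_*\mathcal L_a)=k+1$: the factor $\widehat T(\mathcal L_a)=D_{II}$, plus $k$ factors of type $(0,B_i)$ supported on the bad branches. (Incidentally, since $q_i=\mathrm{id}$, the piece $(0\to B_i\to 0)$ is a \emph{quotient} of the $i$-th triangle, not a subobject.)

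Second, you dismiss the identity $c(Rj_*\mathcal L_a)=\sum_N c({}^pj^2_*N)$ for the wrong reason. Exactness of ${}^pj^2_*$, which you yourself invoke, is exactly what guarantees additivity: a composition series for $P^\bullet$ pushes forward to a filtration of ${}^pj^2_*P^\bullet$ with successive quotients ${}^pj^2_*N$, and lengths add. Full faithfulness is irrelevant. The paper's proof is precisely the approach you abandon: split ${}^pj^1_*\mathcal L_a$ into $D_I$ (the sum of the $k$ bad-branch pieces) and $D_{II}=\widehat T(\mathcal L_a)$, push each forward by the exact ${}^pj^2_*$, and count. For $D_I$ the map $var_I$ is the scalar $\Pi a_i-1$ on $\C^k$, giving length $2k$ or $k$. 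For $D_{II}$ one computes $\mathrm{rk}(var_{II})$ on a space of dimension $n-k-1$: when $\Pi a_i=1$ a Gaussian elimination shows this rank equals $1$ (not $n-k-1$ as you guess), giving length $1+(n-k-2)=n-k-1$; when $\Pi a_i\neq 1$ a determinant computation parallel to Lemma~\ref{propdetm} gives full rank and length $1$. Summing produces the three cases.

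Your fallback plan --- computing $var$ directly on $\psi(P^\bullet)$ for the undecomposed $P^\bullet$ --- is workable in principle but harder, and the length formula you write is not well-formed (for instance $\psi(P^\bullet)$ lives in $\mathcal B$, not $\mathcal A$, and your rank values exceed the dimension of the space $var_{II}$ acts on). Splitting into $D_I$ and $D_{II}$ \emph{before} applying ${}^pj^2_*$ is what makes the rank computations tractable.
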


As in the previous theorem we will use that $Rj_*\mathcal L_a={}^pj_ *^2({}^p{j_*}^1\mathcal L_a)$.

\subsubsection{Decomposition of ${}^p{j_*}^1\mathcal L_a$}
By Proposition \ref{TeorP1},  $^pj_*^1 \mathcal L_a$ can be represented by the following diagram:

$$\xymatrix{
A \ar[dd]^{a_1-1} \ar[ddrr]^{a_2-1} \ar[ddrrrrrr]^<<<<<<<<<<<<<<<<<<<<{a_n -1} \ar[rrrrrr]^{a_1-1,a_2-1,\ldots, a_n -1}& & & & & & A \\ 
 & & & & & & \\
B_1 \ar[uurrrrrr]^>>>>>>>>>>>>>>>>>>>>{id} & 
 & B_2 \ar[uurrrr]_>>>>>>>>>>>>>>>>>>>>{id} & 
& \ldots & 
 & B_n \ar[uu]_{id} } 
$$
Here $A, B_i,\ i=1,...,n$ are copies of $\C^1$, and the uppermost arrow from $A$ to $A$ represents $n$ arrows, making $n$ commutative triangles.
\begin{lem}
If $a_1, \ldots, a_k = 1$, according to Proposition \ref{j1irre}, $^pj_*^1 \mathcal L_a$ is not irreducible and can be decomposed into $D_I$ and $D_{II}$,  such that 
$$ 
D_I \rightarrow {^pj_*^1 \mathcal L_a} \rightarrow D_{II}
$$
is a short exact sequence.

$D_I$ and $D_{II}$ are represented, respectively, by the diagrams
$$\xymatrix{
0 \ar[dd]^0 \ar[ddrr]^0 \ar[ddrrrr]^0 \ar[ddrrrrrr]^<<<<<<<<<<<<<<<<<<<<0 \ar[rrrrrr]^{0,0, \ldots, 0} & & & & & & 0 \\ 
 & & & & & & & & \\
B_1 \ar[uurrrrrr]^>>>>>>>>>>>>>>>>>>>>0 & 
& B_k \ar[uurrrr]_>>>>>>>>>>>>>>>>>>>>0 &
& 0 \ar[uurr]_0 & 
& 0 \ar[uu]_0 } 
$$

and

$$\xymatrix{
A \ar[dd]^0 \ar[ddrr]_0 \ar[ddrrrr]_<<<<<<<<<<<<<<<<<<<<<<<<<<<{a_{k+1}-1} \ar[ddrrrrrr]^<<<<<<<<<<<<<<<<<<<<<<<<<<<<<<{a_n-1} \ar[rrrrrr]^{0,0, \ldots, 0 , a_{k+1}-1, \ldots, a_n -1} & & & & & & A \\ 
 & & & & & & & & \\
0 \ar[uurrrrrr]^>>>>>>>>>>>>>>>>>>>>0 &
& 0 \ar[uurrrr]_>>>>>>>>>>>>>>>>>>>>0 &
& B_{k+1} \ar[uurr]_{id} &
& B_n \ar[uu]_{id} } 
$$
\end{lem}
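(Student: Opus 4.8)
The plan is to recognise the two summands intrinsically rather than to guess morphisms. For $P:={}^pj^1_*\mathcal L_a$ the structure maps of \eqref{diagABC} are $m_i=a_i-1$ and $n_i=\mathrm{id}$, and the variation on the $i$-th stratum is $var_i=a_i-1$, which vanishes for $i\le k$ and is invertible for $i>k$. By Proposition~\ref{3functors} the intermediate extension ${}^pj^1_{!*}\mathcal L_a$ is the image factorisation $FP\twoheadrightarrow\mathrm{im}(var)\hookrightarrow GP$; here the $i$-th middle term $\mathrm{im}(var_i)$ is $0$ for $i\le k$ and $\C$ for $i>k$, and the induced maps are $m_i=a_i-1$, $n_i=\mathrm{id}$ on the nontrivial strata. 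This is exactly the diagram defining $D_{II}$, so $D_{II}={}^pj^1_{!*}\mathcal L_a$; by Proposition~\ref{irpor} it is irreducible, being the image factorisation of the rank-one (hence irreducible) local system $\mathcal L_a$.

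I would then use the canonical factorisation of the adjunction map. Since the intermediate extension is by definition the image of ${}^pj^1_!\mathcal L_a\to{}^pj^1_*\mathcal L_a$, there is a canonical monomorphism $D_{II}={}^pj^1_{!*}\mathcal L_a\hookrightarrow{}^pj^1_*\mathcal L_a=P$; concretely it is the pair $(\alpha,(\beta_i))$ with $\alpha=\mathrm{id}$, $\beta_i=\mathrm{id}$ for $i>k$ and $\beta_i=0$ for $i\le k$, and checking that it is a morphism in $Q_\Lambda$ is immediate. Its cokernel I would compute through the projection functors $(A,(B_i))\mapsto A$ and $(A,(B_i))\mapsto B_i$, which are exact (\cite{MV2}) and hence commute with cokernels: on $A$ and on the $B_i$ with $i>k$ the map is an isomorphism and contributes $0$, while on each $B_i$ with $i\le k$ it is the inclusion $0\hookrightarrow\C$ and contributes $\C$. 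The cokernel is therefore the object with zero $A$-part and $B_i$-part equal to $\C$ exactly for $i\le k$, that is $D_I$, giving the short exact sequence $0\to D_{II}\to P\to D_I\to 0$.

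The only delicate point --- and the main obstacle --- is the direction of the sequence: it is $D_{II}$ that embeds and $D_I$ that is the quotient. This is forced by the asymmetry $n_i=\mathrm{id}$. For any morphism $D_I\to P$ the compatibility $n_i^{P}\circ\beta_i=\alpha\circ n_i^{D_I}$ has vanishing right-hand side, since $n_i^{D_I}$ lands in the zero $A$-part of $D_I$, and left-hand side $\beta_i$ for $i\le k$; thus $\beta_i=0$, and together with $\alpha\colon 0\to\C$ being zero this makes every such morphism vanish, so $D_I$ is not a subobject of $P$. This matches the one-dimensional model of Proposition~\ref{prop:Clc}, where for a $*$-extension the skyscraper-type factor appears as a quotient. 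Finally, since $D_I$ has zero $A$-part it is a direct sum of the $k$ irreducible objects of the second kind in Proposition~\ref{irpor}, one for each line with $a_i=1$, which is what will feed the factor counts of Theorem~\ref{teopr}.
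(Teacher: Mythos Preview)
Your argument is correct and more careful than the paper's one-sentence proof, which merely asserts that monomorphisms and epimorphisms in $Q_\Lambda$ are detected componentwise. In fact you have caught a slip in the statement: the short exact sequence as written, with $D_I$ as subobject and $D_{II}$ as quotient, is in the wrong order. Your computation shows this cleanly: since $n_i^P=\mathrm{id}$ for every $i$, the compatibility $n_i^P\circ\beta_i=G(\alpha)\circ n_i^{D_I}$ forces $\beta_i=0$ for any morphism $D_I\to P$, so $D_I$ admits no nonzero map into $P$. The correct sequence is $0\to D_{II}\to P\to D_I\to 0$, in agreement with Proposition~\ref{prop:Clc}(2), where for a $*$-extension the skyscraper factor appears as a quotient. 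The paper's appeal to ``componentwise injections'' overlooks that one must first have a morphism in $Q_\Lambda$ before testing injectivity on vertices.

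Beyond this correction, your route is genuinely different and somewhat more conceptual: you recognise $D_{II}$ as the intermediate extension ${}^pj^1_{!*}\mathcal L_a=\widehat T(\mathcal L_a)$ directly from Proposition~\ref{3functors}, which immediately yields both the canonical inclusion $D_{II}\hookrightarrow P$ and, via Proposition~\ref{irpor}, the irreducibility of $D_{II}$. The paper does neither explicitly. The reversal of the sequence is harmless for the sequel: Theorem~\ref{teopr} only needs that $D_I$ and $D_{II}$ are the two pieces of a filtration of $P$ and that ${}^pj^2_*$ is exact, so the factor count is unaffected by which one is the subobject.
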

\begin{proof} The property of the diagram category that injections are exactly the maps that are injections on each of the vertices, and similarily for surjections, makes this clear.
\end{proof}

We say that the diagrams of the form that represent $D_I$ are of type $I$, characterized by the fact that the two upper vertices are $0$. Those of the form that represent $D_{II}$, where  all diagonal maps are isomorphisms, are said to be of type $II$.
 

\subsubsection{Decomposition of ${}^pj_*^2D_I$}
\label{subsubsection: dec1}  We can use the description of how the functor $^pj^2_*$ is described in the diagram category, in Lemma \ref{2em1} to describe what it does to the above diagrams.
\begin{lem}
Apply $^pj^2_*$ to $D_I$. Then:
\begin{itemize}
 \item $\psi(D_I): coker (0 \xrightarrow{0, \ldots , 0} B_1 \oplus \ldots \oplus B_k \oplus 0 \oplus \ldots \oplus 0) = \C^k = \langle e_1, \ldots , e_k \rangle$ ;
 \item $\psi_c(D_I): ker(B_1 \oplus \ldots \oplus B_k \oplus 0 \oplus \ldots \oplus 0 \xrightarrow{0, \ldots , 0} 0) = \C^k = \langle e_1, \ldots , e_k \rangle$:
 \item $var_I(e_l)= \Pi_{i=1}^n a_i e_l - e_l$ (since $p_i = a_i -1 = 0$, $i=1, \ldots, k$).
\end{itemize}
\begin{equation}
\label{diadi}
\xymatrix{
^pj_*^2(D_I): & \C^k \ar[rr]^{var_I} \ar[rd]_{var_I} & & \C^k \\ 
& & \C^k \ar[ur]_{id} &  }
\end{equation}
\end{lem}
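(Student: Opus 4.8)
The plan is to obtain all three items by feeding the type-$I$ diagram of $D_I$ into the formulae of Lemma~\ref{2em1} --- which express $\psi$, $\psi_c$ and the variation map in terms of the diagram data $(A,B_i,p_i,q_i)$ alone, and hence apply to $D_I$ although its diagram is not of the shape \eqref{diagABC} --- and then reading off the diagram attached to $^pj^2_*$ from Proposition~\ref{3functors}.

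First I would record what the type-$I$ shape supplies: the two upper vertices are $0$, so $A=0$, every structure map $p_i\colon A\to B_i$ and $q_i\colon B_i\to A$ vanishes, while $B_i=\C$ for $i\le k$ and $B_i=0$ for $i>k$. Substituting $A=0$ into the first two formulae of Lemma~\ref{2em1} gives
$$\psi(D_I)=\mathrm{cok}\bigl(0\to B_1\oplus\cdots\oplus B_n\bigr),\qquad \psi_c(D_I)=\ker\bigl(B_1\oplus\cdots\oplus B_n\to 0\bigr),$$
and as only the first $k$ summands survive, both are canonically $\C^k=\langle e_1,\dots,e_k\rangle$, with $e_l$ the image of the generator of $B_l$.

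Next I would evaluate the variation formula of Lemma~\ref{2em1} term by term. In
$$var(e_l)=-\sum_{i=l+1}^{l+n}p_i\,a_{i-1}\cdots a_{l+1}\,e_i+\prod_{i=1}^{n}a_i\,e_l-e_l ,$$
every summand of the sum over $i$ vanishes, because in the diagram of $D_I$ each $p_i$ is the zero map (its source $A$ equals $0$); equivalently, the terms with $i\le k$ die because $a_i-1=0$ while those with $i>k$ die because $e_i=0$ in $\psi_c(D_I)$. Either way the sum disappears, regardless of the conventions governing the products $a_{i-1}\cdots a_{l+1}$, and we are left with $var_I(e_l)=\prod_{i=1}^{n}a_i\,e_l-e_l$, as asserted.

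Finally, by Proposition~\ref{3functors} the functor $^pj^2_*$ is encoded by $\widehat G$, that is, by the factorization $\psi(D_I)\xrightarrow{\,var_I\,}\psi_c(D_I)\xrightarrow{\ \mathrm{id}\ }\psi_c(D_I)$; substituting the spaces just computed produces precisely \eqref{diadi}. I do not expect a real obstacle here --- the statement is a direct specialization of results already established --- and the one step that repays a moment's attention is the bookkeeping: confirming that every term of the variation sum really does vanish, and that the surviving map $e_l\mapsto\prod_{i=1}^{n}a_i\,e_l-e_l$ is well defined on $\psi_c(D_I)=\C^k$ (it is, being diagonal in the basis $e_1,\dots,e_k$). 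After that, \eqref{diadi} drops out with nothing further to do.
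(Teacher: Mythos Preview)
Your proposal is correct and follows precisely the route the paper intends: the paper states this lemma without proof, treating it as an immediate specialization of Lemma~\ref{2em1} and Proposition~\ref{3functors}, which is exactly what you do. The only point worth flagging is that Lemma~\ref{2em1} as formulated in the paper is stated for objects ``corresponding to a sheaf $\mathcal L_a$'', whereas $D_I$ has $A=0$; you rightly note that the underlying MacPherson--Vilonen formulae hold for general objects of $Q_\Lambda$, and your two-fold justification for the vanishing of the sum (either $p_i=a_i-1=0$ for $i\le k$, or $e_i=0$ in $\psi_c(D_I)$ for $i>k$) matches the paper's parenthetical remark.
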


It is clear that $^pj_*^2(D_I)$ is not irreducible.

\begin{lem}
\label{lemdi}
\begin{itemize}
\item If $\Pi_{i=1}^n a_i = 1$, then $var_I=0$ and $c(^pj_*^2(D_I))=2k$.
\item If $\Pi_{i=1}^n a_i \neq 1$, then $var_I$ is represented by the order $k$ matrix
\begin{equation}
\label{matrdi}
\left[ \begin{array}{cccc}
-1 + \Pi_{i=1}^n a_i & 0 & \dots & 0 \\
0 & -1 + \Pi_{i=1}^n a_i & \ldots & 0 \\
\vdots & \vdots & \vdots & \vdots \\ 
0 & 0 & \ldots & -1 + \Pi_{i=1}^n a_i
\end{array} \right] 
\end{equation}

and $c(^pj_*^2(D_I))=k$.
\end{itemize}
\end{lem}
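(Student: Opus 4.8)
The plan is to reduce the computation to a single coordinate. Write $D_I=\bigoplus_{l=1}^{k}E_l$, where $E_l$ is the summand of $D_I$ whose only nonzero vertex is $B_l=\C$. By Proposition \ref{irpor} each $E_l$ is irreducible in $\mathcal M(\C^2-\{0\})$, being of the form $(0\to L\to 0)$ with $L$ a rank-one (hence irreducible) local system. Applying Lemma \ref{2em1} to $E_l$ gives $\psi(E_l)=\psi_c(E_l)=\C e_l$ and $var(e_l)=(\Pi_{i=1}^{n}a_i-1)e_l$, so by Proposition \ref{3functors},
\[
R_l:={}^pj^2_*(E_l)=\widehat{G}(E_l)=\Bigl(\C e_l\xrightarrow{\,\Pi_{i=1}^{n}a_i-1\,}\C e_l\xrightarrow{\ \mathrm{id}\ }\C e_l\Bigr).
\]
Since the two structure maps of the diagram \eqref{diadi} are scalar multiples of the identity, that diagram is the coordinatewise direct sum $\bigoplus_{l=1}^{k}R_l$, so $c\bigl({}^pj^2_*(D_I)\bigr)=\sum_{l=1}^{k}c(R_l)=k\,c(R_1)$ and it suffices to compute $c(R_1)$ in each case.

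If $\Pi_{i=1}^{n}a_i=1$ then $var_I=0$, so the matrix assertion is trivial and $R_l$ is the diagram $\C e_l\xrightarrow{0}\C e_l\xrightarrow{\mathrm{id}}\C e_l$. Since $\mathrm{Im}(var_{E_l})=0$, the object $\widehat{T}(E_l)$ equals $(\C e_l\to 0\to\C e_l)$, and it is a subobject of $R_l$ via the identity on the two outer vertices and $0\hookrightarrow\C e_l$ on the middle one (compatibility with the commuting triangle is automatic because $var_I=0$). Taking kernels and cokernels vertexwise on the $\mathcal A$- and $\mathcal B$-components — which is legitimate as those functors are exact — the quotient $R_l/\widehat{T}(E_l)$ is the object $(0\to\C e_l\to 0)$. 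By Proposition \ref{irpor}, $\widehat{T}(E_l)$ is irreducible since $E_l$ is, and $(0\to\C e_l\to 0)$ is irreducible since a rank-one local system is irreducible. Hence $c(R_l)=2$ and $c\bigl({}^pj^2_*(D_I)\bigr)=2k$.

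If $\Pi_{i=1}^{n}a_i\neq 1$ then $var_I=(\Pi_{i=1}^{n}a_i-1)\cdot\mathrm{id}$ is the invertible diagonal matrix \eqref{matrdi}, which proves the matrix assertion. As the natural transformation $var$ is an isomorphism on each $E_l$, the canonical maps $\widehat{F}(E_l)\to\widehat{T}(E_l)\to\widehat{G}(E_l)$ are all isomorphisms, so $R_l=\widehat{G}(E_l)\cong\widehat{T}(E_l)$, which is irreducible by Proposition \ref{irpor} since $E_l$ is. Hence $c(R_l)=1$ and $c\bigl({}^pj^2_*(D_I)\bigr)=k$.

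The arithmetic is immediate; the one delicate point is the bookkeeping inside the glued category $\mathcal C(\psi,\psi_c,var)$: checking that the subobject and quotient exhibited in the first case are genuine sub/quotient objects (a morphism must respect the entire commuting triangle, and monomorphisms and epimorphisms are detected vertexwise on the two components), and the identification $R_l={}^pj^2_*(E_l)$ that allows Proposition \ref{irpor} to pick out the simple constituents. This is routine, but it is the step where an indexing slip is easiest to make.
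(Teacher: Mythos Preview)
Your proof is correct and follows essentially the same strategy as the paper's: both arguments exploit that $var_I$ is the scalar $(\Pi_i a_i-1)\cdot\mathrm{id}$ on $\C^k$ and then decompose into rank-one pieces, invoking Proposition~\ref{irpor} to identify the simple constituents. The only cosmetic difference is the order of operations---you split $D_I=\bigoplus_l E_l$ into irreducible summands first and then analyse each ${}^pj_*^2(E_l)$ as a length-two (resp.\ length-one) object, whereas the paper keeps $\C^k$ intact, first exhibits the two-step filtration $\mathcal D_I''\hookrightarrow{}^pj_*^2(D_I)\twoheadrightarrow\mathcal D_I'$, and only afterwards breaks each piece into $k$ simples; the resulting composition factors are identical.
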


\begin{proof}
\begin{itemize}
\item When $\Pi_{i=1}^n a_i = 1$, clearly from the formula for $var_I(b_le_l)$ we have that $var_I=0$ and \eqref{diadi} is an extension of two factors of the following form:
$$\xymatrix{
\mathcal{D_I'}: & 0 \ar[rr]^{0} \ar[rd]_0 & & 0 \\ 
& & \C^k \ar[ur] &  }
\quad \mbox{and} \quad
\xymatrix{
\mathcal{D_I''}: & \C^k \ar[rr]^{0} \ar[rd]_0 & & \C^k \\ 
& & 0 \ar[ur] &  }$$

$\mathcal{D_I'}$ can be decomposed into $k$ objects of the form

$$\xymatrix{
0 \ar[rr]^{0} \ar[rd]_0 & & 0 \\ 
& \C \ar[ur] &  }$$

$\mathcal{D_I''}$ can be decomposed into $k$ objects of the form

$$\xymatrix{
\C \ar[rr]^{0} \ar[rd]_0 & & \C \\ 
& 0 \ar[ur] &  }$$

We have, in total, \underline{$2k$ decomposition factors}. 
It should be noted that this proves the theorem for the case $k=n$.

\item When $\Pi_{i=1}^n a_i \neq 1$, $var_I$ is given by \eqref{matrdi} and there are $k$ decomposition factors of \eqref{diadi}, 
all of the form:

$$\xymatrix{
\C \ar[rr]^{\Pi_{i=1}^n a_ie_1-e_1} \ar[rd] & & \C \\ 
& \C \ar[ur] &  }.
$$




We have, in total \underline{$k$ decomposition factors}.
\end{itemize}
\end{proof}
\subsubsection{Decomposition of ${}^p{j_*}^2D_{II}$} 

Assume now that $k<n$, so that $D_{II}$ is non-trivial. We use the description of the functor $^pj^2_*$ in Lemma \ref{2em1} again.
\begin{lem}
Apply $^pj^2_*$ to $D_{II}$. Then:
\begin{itemize}
 \item $\psi(D_{II}): coker (A \xrightarrow{0, \ldots , 0, a_{k+1} -1 , \ldots a_n -1} 0 \oplus \ldots \oplus 0 \oplus B_{k+1} \oplus \ldots \oplus B_n) = \C^{n-k-1} = \langle e_{k+1}, \ldots , e_{n-1} \rangle$ ;
 \item $\psi_c(D_{II}): ker(0 \oplus \ldots \oplus 0 \oplus B_{k+1} \oplus \ldots \oplus B_n \xrightarrow{0, \ldots , 0, a_{k+1} -1 , \ldots a_n -1} A) = \C^{n-k-1} = \langle e_{k+1}, \ldots , e_{n-1} \rangle$:
 \item $var_{II}(e_l): (- \Sigma_{i=l+1}^{l+n}p_i a_{i-1} \ldots a_{l+1}e_i) + \Pi_{i=1}^n a_ie_l - e_l$, for $l=k+1, \ldots, n-1$. 
\end{itemize}
$$\xymatrix{
^pj^2_*(D_{II}): & \C^{n-k-1} \ar[rr]^{var_{II}} \ar[rd]_{var_{II}} & & \C^{n-k-1} \\ 
& & \C^{n-k-1} \ar[ur]_{id} &  } 
$$
\end{lem}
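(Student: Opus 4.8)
The plan is to obtain this lemma as a direct specialization of Lemma~\ref{2em1} and Proposition~\ref{3functors}, exactly in the style of the computation of ${}^pj^2_*D_I$ carried out just above. Recall that $D_{II}$ is the summand of ${}^pj^1_*\mathcal L_a$ with $A=\C$, with $B_i=\C$ for $i=k+1,\dots,n$ and $B_i=0$ for $i\le k$, and with structure maps $p_i=a_i-1$ and $q_i=\mathrm{id}$; since $a_1=\dots=a_k=1$ we have $p_i=0$ for $i\le k$ and $p_i\ne0$ for $i>k$. Being a direct summand of ${}^pj^1_*\mathcal L_a$, it is described by diagrams of the shape \eqref{diagABC} (some with $B_i=0$), so Lemma~\ref{2em1} applies; and by Proposition~\ref{3functors}, applied with $F=\psi$, $G=\psi_c$, $T=var$ for the stratum $\{0\}$, the object ${}^pj^2_*D_{II}$ is the factorization $\psi(D_{II})\xrightarrow{\,var\,}\psi_c(D_{II})\xrightarrow{\ \mathrm{id}\ }\psi_c(D_{II})$. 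Hence it remains only to evaluate $\psi$, $\psi_c$ and $var$ on $D_{II}$.

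First I would compute $\psi(D_{II})=cok\bigl(A\xrightarrow{(p_1,\dots,p_n)}\bigoplus_iB_i\bigr)$: since $B_i=0$ for $i\le k$ this is the cokernel of $\C\to\C^{n-k}$, $1\mapsto\sum_{i>k}(a_i-1)e_i$, a map that is injective because each $a_i-1$ with $i>k$ is nonzero; so the cokernel has dimension $n-k-1$, and as $a_n-1\ne0$ the classes of $e_{k+1},\dots,e_{n-1}$ form a basis. Dually, $\psi_c(D_{II})=ker\bigl(\bigoplus_iB_i\xrightarrow{(q_1,\dots,q_n)}A\bigr)$ is the hyperplane $\{(b_i)_{i>k}:\sum_{i>k}b_i=0\}$ in $\C^{n-k}$, again of dimension $n-k-1$. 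This yields the first two items and the shape of the diagram, with the map $\Phi(D_{II})\to\psi_c(D_{II})$ being the identity by the choice $G=\psi_c$ above.

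It then remains to read off $var_{II}$ by substituting the data of $D_{II}$ into the formula of Lemma~\ref{2em1}, namely $var(e_l)=\bigl(-\sum_{i=l+1}^{l+n}p_ia_{i-1}\cdots a_{l+1}e_i\bigr)+\Pi_{i=1}^na_i\,e_l-e_l$ for $l=k+1,\dots,n-1$: since $p_i=0$ for $i\equiv1,\dots,k\pmod n$, only indices $i$ lying in $\{k+1,\dots,n\}\bmod n$ survive in the sum, and the products $a_{i-1}\cdots a_{l+1}$ collapse over the ``gap'' indices where $a_i=1$, leaving precisely the claimed expression for $var_{II}(e_l)$; that the result lies in $\psi_c(D_{II})$ is automatic, $var$ being by construction a map $\psi\to\psi_c$. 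I expect the only genuine friction to be this last piece of bookkeeping --- tracking the cyclic indices modulo $n$ and checking that the two conventions for $a_{i-1}\cdots a_{l+1}$ stated after Lemma~\ref{2em1} interact correctly with $p_i=0$ and $a_i=1$ ($i\le k$), so that no factor is spuriously introduced or dropped. Everything else is an immediate reading-off of Lemma~\ref{2em1} and Proposition~\ref{3functors} plus the elementary rank computations above.
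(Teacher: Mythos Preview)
Your proposal is correct and is exactly the approach the paper (implicitly) takes: the lemma is stated there without proof, as an immediate reading-off of Lemma~\ref{2em1} together with Proposition~\ref{3functors}, precisely as in the parallel computation for $D_I$. One small slip: $D_{II}$ is a \emph{quotient} of ${}^pj^1_*\mathcal L_a$, not a direct summand (the paper only asserts a short exact sequence $D_I\hookrightarrow{}^pj^1_*\mathcal L_a\twoheadrightarrow D_{II}$), but this does not affect your argument since all you use is the diagram data of $D_{II}$.
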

Note that there is a choice of basis in the lemma. It follows that we have that $^pj^2_*(D_{II})$ is an extension of the following two diagrams:
$$\xymatrix{
\mathcal{D_{II}'} : & 0 \ar[rr]^{0} \ar [rd] & & 0 \\
 & & \C^{n-k-1}/im(var_{II}) \ar[ur] & 
}$$ and
$$\xymatrix{
\mathcal{D_{II}''}: & \C^{n-k-1} \ar[rr]^{var_{II}} \ar[rd] & & \C^{n-k-1}\\
 & & im(var_{II}) \ar[ur] &
}.$$

\begin{lem}
\label{lemdii}
Assume that $a_1, \ldots, a_k = 1$, $a_{k+1}, \ldots, a_n \neq 1$ and $\Pi_{i=1}^n a_i=1$. Then, $c(\mathcal{D'_{II}})=n-k-2$ and $c(\mathcal{D''_{II}})=1$. Furthermore, if $\Pi_{i=1}^n a_i\neq 1$, then $^pj^2_*(D_{II})$ is irreducible.
\end{lem}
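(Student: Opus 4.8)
The plan is to treat $\mathcal{D''_{II}}$ and $\mathcal{D'_{II}}$ separately; for $\mathcal{D''_{II}}$ no computation is needed, and all the content of the lemma lies in the rank of $var_{II}$, which governs $\mathcal{D'_{II}}$ and, in the last assertion, all of ${}^pj^2_*D_{II}$. First I would note that $D_{II}$ is simple in $\mathcal{M}(\C^2-\{0\})$: by Lemma~\ref{varsim}, on each $\tilde\Lambda_{S_i}$ the functors $\psi,\psi_c$ take $\mathcal L_a$ to $\C$ with $var$ equal to multiplication by $a_i-1$, and one checks that the diagram defining $D_{II}$ is precisely $\widehat T(\mathcal L_a)={}^pj^1_{!*}\mathcal L_a$ (Proposition~\ref{3functors}). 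Since $\mathcal L_a$ is a rank~$1$, hence simple, local system on $\C^2-S$, Proposition~\ref{irpor} makes $D_{II}$ simple; then $\mathcal{D''_{II}}=\widehat T(D_{II})={}^pj^2_{!*}D_{II}$ is simple by the same proposition, so $c(\mathcal{D''_{II}})=1$, with no hypothesis on $\Pi a_i$.

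Next I would compute the rank of $var_{II}$. Since $B_i=0$ for $i\le k$, the composite in \eqref{eq:varmatrix} attached to $D_{II}$ is the submatrix of $M_n$ from \eqref{formag} on the index set $\{k+1,\dots,n\}$, which, as $a_1=\cdots=a_k=1$, is exactly $M_{n-k}(a_{k+1},\dots,a_n)$; its rank equals $\operatorname{rank}(var_{II})$, since composing with the surjection $\bigoplus B_i\twoheadrightarrow\psi(D_{II})$ and the injection $\psi_c(D_{II})\hookrightarrow\bigoplus B_i$ changes nothing. If $\Pi_{i=1}^n a_i\ne 1$, then $\Pi_{i=k+1}^n a_i\ne 1$ and $a_{k+1}\ne 1$, so by Lemma~\ref{propdetm} the minor $M'_{n-k-1}(a_{k+1},\dots,a_n)$ is nonzero (the case $n=k+1$ being immediate); hence $\operatorname{rank}(var_{II})=n-k-1=\dim\psi(D_{II})$, so $var_{II}$ is an isomorphism, $\mathcal{D'_{II}}=0$, and ${}^pj^2_*D_{II}=\widehat T(D_{II})$ is irreducible, which is the last assertion. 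If instead $\Pi_{i=1}^n a_i=1$, so that $\Pi_{i=k+1}^n a_i=1$, substituting this relation into \eqref{formag} makes every entry (for $r<c$, $r=c$ and $r>c$ alike) equal to $d_{rc}=(1-a_r)\beta_r\beta_{c+1}^{-1}$, where $\beta_j:=\Pi_{i=1}^{j-1}a_i$; thus the matrix is the rank-one outer product $\mathbf u\mathbf v^{T}$ with $\mathbf u=\bigl((1-a_r)\beta_r\bigr)_{r>k}$ and $\mathbf v=\bigl(\beta_{c+1}^{-1}\bigr)_{c>k}$, both nonzero since $\mathbf u_{k+1}=1-a_{k+1}\ne 0$ and no $\beta_j$ vanishes, so $\operatorname{rank}(var_{II})=1$.

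It remains to read off $c(\mathcal{D'_{II}})$ when $\Pi a_i=1$. With $\operatorname{rank}(var_{II})=1$, the object $\mathcal{D'_{II}}$ has both upper vertices zero and lower vertex $\operatorname{coker}(var_{II})$, of dimension $(n-k-1)-1=n-k-2$. Being a quotient of the perverse sheaf ${}^pj^2_*D_{II}$, $\mathcal{D'_{II}}$ is itself a perverse sheaf, hence lies in $\tilde M(\C^2)$ and satisfies the perversity condition \eqref{eq:condition}; since its maps $m$ and $n$ vanish, \eqref{eq:condition} forces the monodromy of $\pi_1(\tilde\Lambda_{\{0\}})$ on $\operatorname{coker}(var_{II})$ to be trivial. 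Hence $\operatorname{coker}(var_{II})$ is a trivial local system, $\mathcal{D'_{II}}$ is a direct sum of $n-k-2$ copies of the simple object with lower vertex $\C$ and zero upper vertices, and $c(\mathcal{D'_{II}})=n-k-2$. I expect the main obstacle to be the rank count in the case $\Pi a_i=1$: seeing that the relation $\Pi_{i=k+1}^n a_i=1$ collapses \eqref{formag} to a single rank-one outer product is exactly where that hypothesis enters; a smaller but necessary point is the appeal to \eqref{eq:condition} to conclude $c(\mathcal{D'_{II}})=\dim\operatorname{coker}(var_{II})$ rather than something smaller, since $\tilde\Lambda_{\{0\}}$ need not be simply connected.
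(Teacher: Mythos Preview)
Your argument is correct and, in its computational heart, cleaner than the paper's. The paper establishes $\operatorname{rank}(var_{II})=1$ in the case $\Pi a_i=1$ by an explicit Gaussian elimination on the matrix $A$ representing the composite \eqref{eq:varmatrix} (this is the unnamed lemma immediately preceding Lemma~\ref{lemdii}); you instead observe that once $\Pi_{i=k+1}^n a_i=1$ is substituted into \eqref{formag}, all three cases collapse to the single expression $d_{rc}=(1-a_r)\beta_r\beta_{c+1}^{-1}$, exhibiting the matrix as an outer product and reading off the rank immediately. For the case $\Pi a_i\neq 1$, the paper merely says ``a determinant computation entirely similar to the one in Lemma~\ref{propdetm}'', whereas you note that the relevant submatrix is literally $M_{n-k}(a_{k+1},\dots,a_n)$ and so Lemma~\ref{propdetm} applies verbatim; this is a genuine economy.

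One point where you are more careful than the paper: in deducing $c(\mathcal{D'_{II}})=\dim\operatorname{coker}(var_{II})$, the paper simply asserts that the number of composition factors equals the dimension of the lower vertex, implicitly treating the local system on $\tilde\Lambda_{\{0\}}$ as trivial. You correctly flag that $\tilde\Lambda_{\{0\}}$ is not simply connected and invoke the perversity constraint \eqref{eq:condition} (with $m=n=0$) to force the monodromy to be trivial; this closes a small gap that the paper leaves open.
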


In order to prove this lemma we need the following result.

\begin{lem}
Assume that $\Pi_{i=1}^n a_i=1$, and let $A$ be the matrix that represents $var_{II}$. Then $rank(A)=1$.
\end{lem}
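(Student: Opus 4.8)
The plan is to write down the matrix $A$ representing $var_{II}$ explicitly using the formula for $var_{II}(e_l)$ from the previous lemma, restricted to indices $l = k+1, \ldots, n-1$, and then show that under the hypothesis $\Pi_{i=1}^n a_i = 1$ all rows are scalar multiples of a single row. Concretely, recall that
\begin{equation*}
var_{II}(e_l) = -\sum_{i=l+1}^{l+n} p_i a_{i-1}\cdots a_{l+1}\, e_i + \Pi_{i=1}^n a_i\, e_l - e_l,
\end{equation*}
where $p_i = a_i - 1$, and that in the present situation $p_i = 0$ for $i = 1, \ldots, k$, so only the terms with $i \in \{k+1, \ldots, n-1\}$ (after reduction mod $n$) survive, and the diagonal term $\Pi a_i\, e_l - e_l$ vanishes because $\Pi_{i=1}^n a_i = 1$. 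This is exactly the content of what must be unwound; the matrix is (a column- and row-deleted version of) the matrix $M_n$ in $(\ref{formag})$, and indeed this lemma can be seen as a degenerate companion to Lemma \ref{propdetm}.

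First I would record the entries $d_{rc}$ of $A$ for $r, c \in \{k+1, \ldots, n-1\}$ by specializing $(\ref{formag})$ with $a_1 = \cdots = a_k = 1$ and $\Pi_{i=1}^n a_i = 1$. With the product relation the diagonal entry becomes $-1 + \Pi_{i\neq c} a_i = -1 + a_c^{-1}$, and the off-diagonal entries simplify because the leading products $\Pi_{i=1}^{r-1} a_i$ telescope against $\Pi_{i=c+1}^n a_i$ using $\Pi_{i=1}^n a_i = 1$. The key observation I expect to fall out is that row $r$ of $A$ equals $(1 - a_r)$ times a fixed vector $v$ independent of $r$ (for the relevant range of indices), so that $A = w\, v^{\mathsf{T}}$ with $w_r = 1 - a_r$; since $a_r \neq 1$ for $r = k+1, \ldots, n$, the vector $w$ is nonzero, and one checks $v \neq 0$ likewise, giving $\operatorname{rank}(A) = 1$.

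The main step, then, is the bookkeeping: carefully matching the index conventions (integers mod $n$, the two conventions on the products $a_{i-1}\cdots a_{l+1}$ spelled out after Lemma \ref{2em1}) so that the claimed rank-one factorization $A = w v^{\mathsf{T}}$ is verified on the nose. I would carry this out by writing the three cases $r < c$, $r = c$, $r > c$ from $(\ref{formag})$, substituting the hypotheses, and exhibiting the common factor. As a sanity check I would verify the claim on the $n = 4$, $k = 0$ matrix $M_4$ displayed in the Example: setting $a_1 a_2 a_3 a_4 = 1$, one sees each row is $(1 - a_r)$ times the row vector obtained from that row by dropping the factor $(1-a_r)$ (with the diagonal rewritten via $-1 + a_1 a_2 a_3 a_4/a_r = -1 + a_r^{-1} = a_r^{-1}(1 - a_r)$), confirming rank one before deletion as well.

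The hard part will be purely notational rather than conceptual: making the proof of the factorization readable despite the cyclic indexing and the ad hoc product conventions. Once the rank-one factorization $A = w v^{\mathsf{T}}$ with $w, v \neq 0$ is in hand, $\operatorname{rank}(A) = 1$ is immediate, and this feeds directly into Lemma \ref{lemdii}: $\operatorname{im}(var_{II})$ is one-dimensional, so $\mathcal{D}_{II}''$ is irreducible by Proposition \ref{irpor} (it is of the form $\widehat{T}$ of an irreducible, namely a one-dimensional local system), giving $c(\mathcal{D}_{II}'') = 1$, while $\mathcal{D}_{II}'$ has underlying space $\C^{n-k-1}/\operatorname{im}(var_{II})$ of dimension $n-k-2$, decomposing into that many copies of the second type of irreducible in Proposition \ref{irpor}, whence $c(\mathcal{D}_{II}') = n-k-2$.
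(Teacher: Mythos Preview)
Your strategy—exhibiting a rank-one factorization $A = w\,v^{\mathsf T}$ rather than performing Gauss elimination as the paper does—is sound and arguably cleaner, but the specific factorization you anticipate is not quite right. After pulling out $(1-a_r)$ from row $r$, the resulting vectors are \emph{not} equal: in your own $n=4$ sanity check (with $a_1a_2a_3a_4=1$) one gets, for instance, $(a_1^{-1},\,a_3a_4,\,a_4,\,1)$ from row~1 and $(1,\,a_2^{-1},\,a_1a_4,\,a_1)$ from row~2, which differ by the scalar $a_1$. The correct row weight is $w_r=(1-a_r)\prod_{i<r}a_i$, paired with $v_c=\bigl(\prod_{i\le c}a_i\bigr)^{-1}$; one then checks the three cases $r<c$, $r=c$, $r>c$ of \eqref{formag} directly, using $\prod_{i=c+1}^n a_i=\bigl(\prod_{i\le c}a_i\bigr)^{-1}$ for $r<c$ and $-1+a_c^{-1}=(1-a_c)a_c^{-1}$ on the diagonal. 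With this correction your argument goes through, and nonvanishing of $w$ and $v$ follows from $a_{k+1},\dots,a_n\neq 0,1$.

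By contrast, the paper normalizes the first row of $A$ and then explicitly eliminates the first column, verifying case by case that every other entry cancels to zero. Your factorization approach is shorter and makes the rank-one structure transparent; the paper's Gauss elimination is more pedestrian but avoids guessing the form of $w$ and $v$. Either way the bookkeeping with the cyclic index conventions is the only real work.
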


\begin{proof}
Set $\beta_k^n = \Pi_{i=k}^n a_i$.
Recall the description \eqref{formag} of the elements of the matrix that represents $var$.
The matrix $A$ has $n-k-1$ rows and columns.
We are going to apply the Gauss elimination to the rows of $A$ in order to obtain its rank. For the matrix $A$, we have:
\begin{itemize}
 \item $d_{11} = -1+\beta_{k+2}^n$;
 \item $d_{1j} = (1-a_{k+1}) \beta_{c+k+1}^n$.
\end{itemize}

Since we assumed $a_1, \ldots, a_k = 1$, $a_{k+1}, \ldots, a_n \neq 1$ and $\beta_{1}^n=1$, we observe that $\beta_{k+1}^n=1$. We can conclude that $\beta_{k+2}^n\neq 1$ and that 
$a_{k+1}=1/\beta_{k+2}^n$. 
We proceed in the following way:
\begin{enumerate}
 \item Multiply the first row by $1/(-1+\beta_{k+2}^n)$. Then $d_{11}=1$ and
 $$d_{1j}= \frac{(1-a_{k+1})\beta_{c+k+1}^n}{-1 + \beta_{k+2}^n} = \frac{(1 - 1/\beta_{k+2}^n) \beta_{c+k+1}^n}{-1 + \beta_{k+2}^n} = $$ 
 $$= \left( \frac{\beta_{k+2}^n -1 }{\beta_{k+2}^n} \right)  \beta_{c+k+1}^n \frac{1}{\beta_{k+2}^n -1} = \frac{1}{\beta_{k+2}^{c+k}}$$
 \item Now we want to cancel all the elements in the first column of $A$. In each step, multiply the first row by $(a_{r+k}-1)\beta_{k+2}^{r+k-1}$, for $r= 1, \ldots, n-k-1$, and sum it to the correspondent row. 
We have 3 possible results:
 \begin{itemize}
  \item if we sum to an element $d_{rc}$, such that $r<c$, given by \\
  $(1-a_{r+k})\beta_{k+1}^{r+k-1}\beta_{c+k+1}^{n}$, the result is:
  $$\frac{1}{\beta_{k+2}^{c+k}} (a_{r+k}-1)\beta_{k+2}^{r+k-1} + (1-a_{r+k})\beta_{k+1}^{r+k-1}\beta_{c+k+1}^n =$$
  $$ = (\beta_{k+2}^{r+k-1})(1-a_{r+k}) \left( a_{k+1} \beta_{c+k+1}^n - \frac{1}{\beta_{k+2}^{c+k}} \right) =$$ 
  $$ = (\beta_{k+2}^{r+k-1})(1-a_{r+k}) \left[ \left(\beta_{k+1}^n - 1 \right) \frac{1}{\beta_{k+2}^{c+k}} \right] = 0$$
  \item if we sum to an element $d_{rc}$, such that $r=c$, given by 
  $-1 + (\beta_{k+1}^n /a_{c+k})$, the result is:
  $$\frac{1}{\beta_{k+2}^{c+k}} (a_{c+k}-1)\beta_{k+2}^{c+k-1} + (-1+\beta_{k+1}^{n}/a_{c+k}) =$$ 
  $$= \frac{1}{a_{c+k}} (a_{c+k}-1) -1 + \frac{1}{\beta_{k+2}^{n}}\beta_{k+2}^{n}/a_{c+k} =$$ 
  $$= \frac{1}{a_{c+k}} (a_{c+k}-1) -1 + \frac{1}{a_{c+k}} = 0$$
  \item if we sum to an element $d_{rc}$, such that $r>c$, given by 
  $(1-a_{r+k})\beta_{c+k+1}^{r+k-1}$, the result is:
  $$\frac{1}{\beta_{k+2}^{c+k}} (a_{r+k}-1)\beta_{k+2}^{r+k-1} + (1-a_{r+k})\beta_{c+k+1}^{r+k-1}=$$
  $$ = (a_{r+k} - 1) \beta_{c+k+1}^{r+k-1} \left( \frac{1}{\beta_{k+2}^{c+k}} \beta_{k+2}^{c+k} - 1 \right) = 0$$ 
 \end{itemize}
\end{enumerate}

We observe that, except for the first row, all the elements of the matrix $A$ are canceled. Therefore $rank(A)=1$. 
\end{proof}

\begin{proof}(of Lemma \ref{lemdii})
We observe that, according to the irreducibility criterion, the diagram $\mathcal{D_{II}''}$ is irreducible.

The object represented by diagram $\mathcal{D_{II}'}$ has $dim(\C^{n-k-1}/im(var_{II}))$ decomposition factors.
Since $rank A = dim(im(var_{II}))=1$, we conclude that $dim(\C^{n-k-1}/im(var_{II}))=n-k-2$. Hence, the  diagram $\mathcal{D_{II}'}$ can be decomposed into $n-k-2$ factors.

Finally, if $\Pi_{i=1}^n a_i \neq 1$, then a determinant computation entirely similar to the one in Lemma \ref{propdetm} shows that $var_{II}$ is invertible 
and hence that
$^pj^2_*(D_{II})=\mathcal{D''_{II}}$ is irreducible. 
\end{proof}


\begin{proof}(of Theorem \ref{teopr})
The case when $k=n$ was treated in \ref{subsubsection: dec1} above. From
Lemma \ref{lemdi} and Lemma \ref{lemdii},  we get 
\begin{itemize}
\item If $\Pi_{i=1}^n a_i = 1$ then $c(Rj_*\mathcal L_a)= 2k + (n-k-2) +1 = n+k-1$.
\item If $\Pi_{i=1}^n a_i \neq 1$ then 
$c(Rj_*\mathcal L_a)= k+1$.
\end{itemize}

This finishes the proof.

\end{proof}

\end{document}